\documentclass[11pt]{article}
\usepackage[utf8]{inputenc}

\usepackage{amsfonts,amsmath,amssymb,amsthm}
\usepackage{graphicx}
\usepackage{indentfirst}
\usepackage{mathtools}

\usepackage{tocloft}
\usepackage{bm}
\usepackage{xcolor}

\usepackage{tikz}
\usetikzlibrary{shapes,chains,scopes,positioning,backgrounds,fit,shadows,calc,arrows.meta}
\usepackage{caption}
\usepackage[urlcolor=blue]{hyperref}

\newtheorem{theorem}{Theorem}

\newtheorem{corollary}[theorem]{Corollary}

\newtheorem{Fact}{Fact}

\theoremstyle{definition}
\theoremstyle{definition}\newtheorem*{case 1}{Case 1}
\theoremstyle{definition}\newtheorem*{case 2}{Case 2}
\theoremstyle{definition}\newtheorem*{case 3}{Case 3}
\theoremstyle{definition}\newtheorem*{case 4}{Case 4}
\theoremstyle{definition}\newtheorem*{case 5}{Case 5}
\theoremstyle{definition}\newtheorem*{subcase 1.1}{Subcase 1.1}
\theoremstyle{definition}\newtheorem*{subcase 1.2}{Subcase 1.2}
\theoremstyle{definition}\newtheorem*{subcase 1.3}{Subcase 1.3}
\theoremstyle{definition}\newtheorem*{subcase 3.1}{Subcase 3.1}
\theoremstyle{definition}\newtheorem*{subcase 3.2}{Subcase 3.2}
\theoremstyle{definition}\newtheorem*{subcase 3.3}{Subcase 3.3}
\theoremstyle{definition}\newtheorem*{subcase 1.2.1}{Subcase 1.2.1}
\theoremstyle{definition}\newtheorem*{subcase 1.2.2}{Subcase 1.2.2}
\theoremstyle{definition}\newtheorem*{subcase 1.2.3}{Subcase 1.2.3}
\theoremstyle{definition}\newtheorem*{subcase 2.2.1}{Subcase 2.2.1}
\theoremstyle{definition}\newtheorem*{subcase 2.2.2}{Subcase 2.2.2}
\theoremstyle{definition}\newtheorem*{subcase 2.2.3}{Subcase 2.2.3}
\theoremstyle{definition}\newtheorem*{subcase 2.1}{Subcase 2.1}
\theoremstyle{definition}\newtheorem*{subcase 2.1.1}{Subcase 2.1.1}
\theoremstyle{definition}\newtheorem*{subcase 2.1.2}{Subcase 2.1.2}
\theoremstyle{definition}\newtheorem*{subcase 2.1.3}{Subcase 2.1.3}
\theoremstyle{definition}\newtheorem*{subcase 2.2}{Subcase 2.2}
\theoremstyle{definition}\newtheorem*{subcase 2.3}{Subcase 2.3}
\theoremstyle{definition}\newtheorem*{subcase 2.4}{Subcase 2.4}
\theoremstyle{definition}\newtheorem*{subcase 2.4.1}{Subcase 2.4.1}
\theoremstyle{definition}\newtheorem*{subcase 2.4.2}{Subcase 2.4.2}
\theoremstyle{definition}\newtheorem*{subcase 2.4.3}{Subcase 2.4.3}
\theoremstyle{definition}\newtheorem*{subcase 2.4.3.1}{Subcase 2.4.3.1}
\theoremstyle{definition}\newtheorem*{subcase 2.4.3.2}{Subcase 2.4.3.2}
\theoremstyle{definition}\newtheorem*{subcase 2.4.3.3}{Subcase 2.4.3.3}
\theoremstyle{definition}\newtheorem*{subcase 2.4.3.4}{Subcase 2.4.3.4}
\theoremstyle{definition}\newtheorem*{subcase 2.4.3.5}{Subcase 2.4.3.5}
\theoremstyle{definition}\newtheorem*{subcase 2.5}{Subcase 2.5}
\theoremstyle{definition}\newtheorem*{subcase 2.5.1}{Subcase 2.5.1}
\theoremstyle{definition}\newtheorem*{subcase 2.5.2}{Subcase 2.5.2}
\theoremstyle{definition}\newtheorem*{subcase 2.5.3}{Subcase 2.5.3}
\theoremstyle{definition}\newtheorem*{subcase 2.5.4}{Subcase 2.5.4}
\theoremstyle{definition}\newtheorem*{subcase 2.5.5}{Subcase 2.5.5}
\theoremstyle{definition}\newtheorem*{subcase 2.6}{Subcase 2.6}
\theoremstyle{definition}\newtheorem*{subcase 3.4}{Subcase 3.4}
\theoremstyle{definition}\newtheorem*{subcase 4.1}{Subcase 4.1}
\theoremstyle{definition}\newtheorem*{subcase 4.2}{Subcase 4.2}
\theoremstyle{definition}\newtheorem*{subcase 4.3}{Subcase 4.3}
\theoremstyle{definition}\newtheorem*{subcase 4.4}{Subcase 4.4}
\theoremstyle{definition}\newtheorem*{subcase 4.5}{Subcase 4.5}
\theoremstyle{definition}\newtheorem*{subcase 5.1}{Subcase 5.1}
\theoremstyle{definition}\newtheorem*{subcase 5.2}{Subcase 5.2}
\theoremstyle{definition}\newtheorem*{subcase 5.3}{Subcase 5.3}
\theoremstyle{definition}\newtheorem*{subcase 5.4}{Subcase 5.4}
\theoremstyle{definition}\newtheorem*{subcase 5.5}{Subcase 5.5}
\theoremstyle{definition}\newtheorem*{subcase 5.6}{Subcase 5.6}
\theoremstyle{definition}
\theoremstyle{definition}\newtheorem*{claim 1}{Claim 1}
\theoremstyle{definition}\newtheorem*{claim 2}{Claim 2}
\theoremstyle{definition}\newtheorem*{claim 3}{Claim 3}

\title{Arc-disjoint Steiner Cycles in Digraphs}
\author{Jie Bai$^{1}$,  Yuefang Sun$^{2,}$\footnote{Corresponding author.},  Chuchu Wang$^{3}$, 
 Shanshan Yu$^{4}$
\\
$^{1}$ School of Mathematics and Statistics,
Ningbo University\\
Zhejiang 315211,  China, jiebai1211@163.com\\
$^{2}$ School of Mathematics and Statistics,
Ningbo University\\
Zhejiang 315211, China, sunyuefang@nbu.edu.cn\\
$^{3}$ School of Mathematics and Statistics,
Ningbo University\\
Zhejiang 315211,  China, wcc7795@163.com\\
$^{4}$ School of Mathematics and Statistics,
Ningbo University\\
Zhejiang 315211,  China, yushanshan33hh@163.com\\
}

\date{}
\begin{document}
		\maketitle
		\begin{abstract}
Let $D=(V(D), A(D))$ be a digraph of order $n$ and let $S\subseteq V(D)$ with $2\leq |S|\leq n$.  A directed cycle $C$ of $D$ is called a directed $S$-Steiner cycle  (or, an $S$-cycle for short) if $S\subseteq V(C)$. Steiner cycles have applications in reliable designs for telecommunication and transportation networks. Two $S$-cycles are called arc-disjoint if they have no common arcs. We use $\lambda_{S}^{c}(D)$ to denote the maximum number of pairwise arc-disjoint $S$-cycles in $D$. The directed cycle $k$-arc-connectivity of $D$ is defined as $$\lambda_{k}^{c} (D)=\min\left \{ \lambda _{S}^{c}(D)\mid S\subseteq V(D),\left | S \right | =k,2\le k\le n  \right \}.$$
			
In this paper, we determine the complexity for $\lambda_{S}^{c} (D)$ on Eulerian digraphs, planar digraphs and symmetric digraphs. We also obtain exact values of $\lambda_{k}^{c} (D)$ on complete digraphs, complete bipartite digraphs and complete regular multipartite digraphs.

\vspace{0.2cm}		 
\textbf{Keywords:} Packing; connectivity; Eulerian digraph; planar digraph; symmetric digraph; complete digraph; complete bipartite digraph; complete regular multipartite digraph    
\end{abstract}
		
\section{Introduction}

\subsection{Motivation, terminology and notation}
All terminology and notation concerning graph theory and algorithmic complexity that are not defined here can be found in \cite{Bang-Jensen09, J.A. Bondy}. Unless stated otherwise, all digraphs considered in this paper are simple, that is, have no loops or parallel arcs. Let $[n]=\{1,2,\cdots,n\}$ and let $deg^+_{D}(v)$, $deg^-_{D}(v)$ be the out-degree and in-degree of a vertex $v$ in the digraph $D$, respectively. For a directed path $P = x_0x_1\cdots x_k$ from $x_0$ to $x_k$, simply denoted by $x_0Px_k$, we call it an {\em $x_0-x_k$ path}. We use \emph{$(v_0,v_k)$-path} to denote an undirected path connecting $v_0$ and $v_k$. A \emph{(directed) Hamiltonian cycle} in a (directed) graph $G$ is a (directed) cycle that visits each vertex of $G$ exactly once. A digraph $D$ is \emph{Eulerian} if its underlying undirected graph is connected and each vertex $v \in V(D)$ satisfies $deg^+_D(v)=deg^-_D(v)$. A digraph $D$ is \emph{planar} if its underlying undirected graph is planar, and \emph{symmetric} if for every arc $xy\in A(D)$, the reverse arc $yx$ also belongs to $A(D)$.

Packing problems are central in graph theory and combinatorial optimization. A classical example is the {\sc Steiner Tree Packing} problem. 
Given an undirected graph $G$ and a terminal set $S\subseteq V(G)$ with $|S|\geq 2$, an \emph{$S$-Steiner tree} (or simply an \emph{$S$-tree}) is a tree $T$ of $G$ with $S\subseteq V(T)$. Two $S$-trees are called \emph{edge-disjoint} if they share no common edge. The basic Steiner tree packing problem asks for the largest collection of pairwise edge-disjoint $S$-trees in $G$. This problem is motivated by both structural graph theory and applications in network design, including VLSI circuit design~\cite{Grotschel-Martin-Weismantel, Sherwani} and Internet Domain~\cite{Li-Mao5}. It is also closely connected with Kriesell's conjecture, which predicts the existence of many edge-disjoint Steiner trees under suitable edge-connectivity assumptions on the terminal set.

Several variants of Steiner tree packing have been studied. In digraphs, one may require the $S$-trees to be edge-disjoint or internally disjoint. In directed graphs, there are several possible analogues, depending on which directed structure to be packed. These include the directed Steiner tree packing problem~\cite{Cheriyan-Salavatipour, Sun-Yeo}, the directed Steiner path packing problem~\cite{Sun-Zhang}, the strong subgraph packing problem~\cite{Sun-Gutin2, Sun-Gutin-Yeo-Zhang, Sun-Gutin-Zhang}, and the directed Steiner cycle packing problem~\cite{Wang-Sun}. Such problems are often called directed Steiner type packing problems, and we refer the reader to \cite{Sun-book} for a recent monograph of this area. 

This paper focuses on the cyclic version. Let $D=(V,A)$ be a digraph and $S\subseteq V$ with $|S|\ge 2$. A directed cycle $C$ of $D$ is called an \emph{$S$-Steiner cycle} (or simply an \emph{$S$-cycle}) if $S\subseteq V(C)$. Two $S$-cycles $C_1, C_2$ are \emph{arc-disjoint} if they have no common arc and they are \emph{internally disjoint} if they are arc-disjoint and their common vertex set is exactly $S$. 
Directed Steiner cycle packing has two natural versions.

{\sc Arc-disjoint directed Steiner cycle packing~(ADSCP):}
given a digraph $D$ and a terminal set $S\subseteq V(D)$, 
find a largest collection of pairwise arc-disjoint $S$-cycles.

{\sc Internally disjoint directed Steiner cycle packing~(IDSCP):} 
given a digraph $D$ and a terminal set $S\subseteq V(D)$, 
find a largest collection of pairwise internally disjoint $S$-cycles.

Directed Steiner cycle packing is related to several classical and recent topics, but it has its own distinct features. 
When $S=V(D)$, an $S$-cycle is a Hamiltonian cycle, so the problem contains Hamiltonian cycle packing as a special case. Since every directed cycle is strong, an $S$-cycle is also an $S$-strong subgraph. Hence directed Steiner cycle packing may be viewed as a cyclic restriction of strong subgraph packing. 

Recently, Sun and Jin~\cite{Sun-Jin1} studied the complexity of directed Steiner cycle packing. They proved that, 
for any fixed integers $k \geq 2$ and $\ell \geq 1$, both ADSCP and IDSCP are NP-complete on general digraphs. 
They also proved that IDSCP remains NP-complete on Eulerian digraphs, while it becomes polynomial-time solvable on symmetric digraphs when $k$ and $\ell$ are fixed. 

To measure the maximum possible size of such packings, we denote by $\kappa_S^c(D)$ and $\lambda_S^c(D)$ the maximum numbers of pairwise internally disjoint and arc‑disjoint $S$-cycles in $D$, respectively. 
Closely related to these local parameters, Wang and Sun~\cite{Wang-Sun} introduced the \emph{directed cycle $k$-connectivity} of a digraph $D$, defined as
$$\kappa _{k}^{c} (D)=\min\left \{ \kappa _{S}^{c}(D)\mid S\subseteq V(D),\left | S \right | =k,2\le k\le n  \right \}.$$
This parameter characterizes the minimum number of internally disjoint directed cycles that can be forced through any prescribed $k$ vertices of $D$.
Furthermore, they fully characterized the directed cycle $k$-connectivity for complete digraphs $\overleftrightarrow{K}_{n}$ and complete regular bipartite digraphs $\overleftrightarrow{K}_{a,a}$. For $\overleftrightarrow{K}_{n}$, they provided a sharp lower bound and exact values when $k\in\{2,3,4,6\}$. For $\overleftrightarrow{K}_{a,a}$, they determined the exact value for every admissible $k$.
Similarly, the \emph{directed cycle $k$-arc-connectivity} can be defined as
$$\lambda _{k}^{c} (D)=\min\left \{ \lambda _{S}^{c}(D)\mid S\subseteq V(D),\left | S \right | =k,2\le k\le n  \right \}.$$
We call $\kappa_k^c(D), \lambda_k^c(D)$ directed cycle
connectivity. The Steiner cycle packing problem is closely related to the directed cycle connectivity, but they differ in focus. The Steiner cycle packing problem studies a local property of a digraph, since the vertex subset $S$ is given in advance. In contrast, directed cycle connectivity characterizes a global property of a digraph. For a fixed positive integer $k\geq 2$, one first computes $\kappa_S^c(D)$ and $\lambda_S^c(D)$ for every subset $S\subseteq V(D)$ with $|S|=k$, and then takes the minimum over all such subsets. The resulting values are the directed cycle connectivity $\kappa_k^c(D)$ and $\lambda_k^c(D)$, respectively.

\subsection{Our results}

This paper studies the arc-disjoint directed Steiner cycle packing problem (ADSCP). We investigate both the computational complexity of deciding whether $\lambda_{S}^{c}(D)\geq \ell$ and the exact value of the global parameter $\lambda_k^c(D)$ on several classes of digraphs.


In Section 2, we investigate the complexity of $\lambda_{S}^{c}(D)$ on Eulerian digraphs, planar digraphs and symmetric digraphs. More precisely, we show that the problem remains NP-complete for Eulerian digraphs when $k\geq 3$ is fixed and $\ell$ is part of the input (Theorem~\ref{Eulerian digraph}), and for planar digraphs when $k\geq 2$ is fixed and $\ell$ is part of the input (Theorem~\ref{planar digraph}). We also prove that, when 
$\ell\geq 1$ is fixed and $k$ is part of the input, the problem is NP‑complete for all three classes (see Theorems~\ref{symmetric digraph}, \ref{planar digraph2} and~\ref{Eulerian digraph2}). In contrast, when $k \geq 2$ is fixed and $\ell\in\{1, 2\}$, the problem can be solved in polynomial time for symmetric digraphs (Theorem~\ref{symmetric digraph2}). 

It is worth noting that, when $\ell \ge 1$ is fixed and $k$ is part of the input, our reductions for Eulerian digraphs, planar digraphs and symmetric digraphs yield more than arc-disjoint $S$-cycles. The constructed $S$-cycles can also be chosen to be internally disjoint. Hence the same reductions also give the corresponding NP-completeness results for IDSCP on these three classes of digraphs. 


In Section 3, we determine exact values of $\lambda^c_k(D)$ on complete digraph $\overleftrightarrow{K}_{n}$,
complete regular multipartite digraph $\overleftrightarrow{K}_{w,w,\dots,w}$ ($l$ times, denoted simply by $\overleftrightarrow{K}_{[w]^l}$)
and complete bipartite digraph $\overleftrightarrow{K}_{t,z}$. These results are summarized in Theorems~\ref{1} and~\ref{2}.

\section{Computational complexity for $\lambda_{S}^{c}(D)$ on Eulerian digraphs, planar digraphs and symmetric digraphs}

To prove that deciding whether $\lambda_S^c(D) \geq \ell$ is NP-complete on Eulerian digraphs for fixed $k$ and with $\ell$ part of the input, we reduce from the classic NP-complete problem: the general {\sc weak $2$-linkage} problem. It is defined as follows: given a digraph $D$ and two pairs $(s_1,t_1)$, $(s_2,t_2)$ of distinct vertices (all four vertices are distinct), deciding whether $D$ contains a pair of arc-disjoint paths $P_1,P_2$ with $P_i$ an $s_i–t_i$ path for $i\in [2]$.



\begin{theorem}\label{S. Fortune}\cite{{S. Fortune}}\label{general weak 2-linkage}
    The general {\sc weak 2-linkage} problem is NP-complete. 
\end{theorem}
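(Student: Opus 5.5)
This is the classical result of Fortune, Hopcroft and Wyllie, so the paper only cites it. To prove it, I would first note membership in NP: a certificate consists of the two paths $P_1$ and $P_2$. Checking that each $P_i$ is an $s_i$–$t_i$ path and that $A(P_1)\cap A(P_2)=\emptyset$ takes polynomial time.

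For NP-hardness I would reduce in two steps. First I would take the known NP-completeness of the vertex-disjoint directed $2$-linkage problem, which Fortune, Hopcroft and Wyllie prove by a reduction from 3-SAT. That reduction builds a variable gadget of two parallel routes for each variable, chosen by the path from $s_1$ to $t_1$, and a clause gadget which the path from $s_2$ to $t_2$ must pass through. The two paths can avoid each other exactly when the truth assignment chosen by $P_1$ satisfies every clause.

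Second, I would convert vertex-disjointness into arc-disjointness by the standard vertex-splitting construction. Replace each vertex $v\notin\{s_1,t_1,s_2,t_2\}$ by an arc $v^-v^+$. Every arc $uv$ of the original digraph becomes $u^+v^-$. For a terminal, keep only its out-copy if it is a source and only its in-copy if it is a sink. Vertex-disjoint paths then map to arc-disjoint paths in the new digraph. Conversely, arc-disjoint paths in the new digraph cannot both use a split vertex $v$, because every route through $v$ uses its unique arc $v^-v^+$. So the two paths pull back to vertex-disjoint paths.

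I expect the main obstacle to be the correctness of the clause gadget in the vertex-disjoint reduction. The difficulty is showing that $P_1$ cannot "cheat" by switching between the literal routes of one variable partway through. Rather than redoing this, I would simply invoke \cite{S. Fortune}, as the paper does.
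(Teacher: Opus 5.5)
Your proposal is correct. It is consistent with the paper, which gives no proof and simply attributes the arc-disjoint statement to Fortune, Hopcroft and Wyllie; your only addition is to start from their vertex-disjoint $2$-linkage result and transfer it by vertex splitting, and that transfer is sound: after splitting, the terminals are pure sources or sinks and so cannot be internal vertices of any path, and every passage through a split vertex $v$ must use its unique arc $v^-v^+$, so arc-disjoint paths pull back to vertex-disjoint ones.
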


In the reduction from the weak $2$-linkage problem to our problem, we need to analyze the flow in the constructed digraph in order to extract the required arc-disjoint paths. To this end, we need the following \emph{Flow Decomposition Theorem}, and then we establish the NP-completeness of deciding $\lambda_S^c(D) \ge \ell$ on Eulerian digraphs.

\begin{theorem}[Flow Decomposition Theorem]\label{Flow Decomposition Theorem}\cite{Bang-Jensen09}
    Every flow $x$ in a network $\mathcal{N}$ can be represented as the arc sum of some path and cycle flows
    $$f(P_1), f(P_2), \dots, f(P_\alpha), f(C_1), \dots, f(C_\beta)$$
    satisfying the following two properties:
    \begin{description}
        \item[(i)] Every directed path $P_i\ (i\in[\alpha])$ with positive flow connects a source vertex to a sink vertex.
        \item[(ii)] $\alpha + \beta \le |V(D)| + |A(D)|$ and $\beta \le |A(D)|$.
    \end{description}
\end{theorem}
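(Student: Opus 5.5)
The statement just above is the classical Flow Decomposition Theorem, which the paper cites from \cite{Bang-Jensen09}. I would prove it by peeling off one path or cycle flow at a time, each step strictly shrinking a finite quantity. Let $x$ be a flow in the network $\mathcal{N}$ on the digraph $D$, with balance $b_x(v)=\sum_{vw}x(vw)-\sum_{uv}x(uv)$. Call $v$ a source if $b_x(v)>0$ and a sink if $b_x(v)<0$. The support of $x$ is the set of arcs with $x(a)>0$. The measure I would induct on is
\[
\Phi(x)=\bigl|\{v: b_x(v)\neq 0\}\bigr|+\bigl|\{a: x(a)>0\}\bigr|.
\]
If $x\equiv 0$ there is nothing to prove, so assume $x$ is nonzero.

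\textbf{Case 1: some vertex $v_0$ has $b_x(v_0)>0$.} Walk forward from $v_0$ along positive-flow arcs. Whenever the current vertex $v$ is not a sink, conservation gives $\sum_{vw}x(vw)\ge\sum_{uv}x(uv)$. If we entered $v$ along a positive arc, the left side is positive, so we can leave $v$ along another positive arc. The walk must eventually either reach a sink or repeat a vertex.

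If it reaches a sink $t$ without repeating a vertex, we have a path $P$ from $v_0$ to $t$. Set
\[
\delta=\min\Bigl\{b_x(v_0),\,-b_x(t),\,\min_{a\in P}x(a)\Bigr\},
\]
and subtract $f(P)$ carrying value $\delta$. Afterwards either $v_0$ is balanced, $t$ is balanced, or some arc of $P$ has zero flow. Internal vertices of $P$ keep their balance. So $\Phi$ strictly drops, and no new sources or sinks are created. This last point matters: it guarantees that every later path still joins a source of the original $x$ to a sink of the original $x$, which gives property (i).

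If instead the walk repeats a vertex, it contains a directed cycle $C$. Subtract $f(C)$ with value $\min_{a\in C}x(a)$. Every balance is unchanged, and at least one arc leaves the support, so $\Phi$ drops.

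\textbf{Case 2: every vertex is balanced.} Then the support of $x$ has minimum out-degree at least $1$ at every vertex it touches, so it contains a cycle. Subtract a cycle flow as in Case 1.

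\textbf{Counting.} Every step is a path step or a cycle step, and each lowers $\Phi$ by at least one. Since $\Phi\le |V(D)|+|A(D)|$ initially, we get $\alpha+\beta\le |V(D)|+|A(D)|$. A cycle step never changes any balance, so it can only lower $\Phi$ by removing an arc from the support. Hence $\beta\le |A(D)|$. When $\Phi=0$ the remaining flow is zero, so the subtracted path and cycle flows sum to $x$ arc by arc.

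I expect the main obstacle to be verifying rigorously that subtraction keeps $x$ nonnegative and never creates new sources or sinks. This needs the choice of $\delta$ above, including the caps at the endpoint balances, and it is exactly what secures property (i). If the network also has lower bounds or capacities, they play no role here: the decomposition concerns only the nonnegative values of $x$.
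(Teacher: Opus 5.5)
The paper gives no proof of this statement (it is quoted from Bang-Jensen and Gutin), and your argument is correct and is essentially the standard proof found there: walk along positive-flow arcs from a source until you reach a sink or repeat a vertex, subtract the bottleneck value, and charge each step to a balance or an arc that drops to zero, where a cycle step can only be charged to an arc. Your two cases are exhaustive because $\sum_{v} b_x(v)=0$; you use this implicitly and might state it.
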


\begin{theorem}\label{Eulerian digraph}
  Fix an integer $k \geq 3$ and let $\ell$ be part of the input. Then, given an Eulerian digraph $D$ and a subset $S \subseteq V(D)$ with $|S| = k$, deciding whether $\lambda_S^c(D) \ge \ell$ is NP-complete.
\end{theorem}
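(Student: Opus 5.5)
Membership in NP is immediate: a certificate consists of $\ell$ directed cycles, each containing $S$, and pairwise arc-disjointness can be checked in polynomial time (we may assume $\ell\le |A(D)|$, since otherwise the answer is trivially no). For hardness we reduce from the general {\sc weak 2-linkage} problem (Theorem~\ref{general weak 2-linkage}). We handle $k=3$ first; larger fixed $k$ is obtained by subdividing one of the arcs used by every $S$-cycle with $k-3$ new terminals, which changes neither Eulerianity nor the answer.

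\textbf{Construction.} Let $(D,s_1,t_1,s_2,t_2)$ be an instance. We make $D$ Eulerian while preserving the linkage question. For each vertex $v$ put $d(v)=\deg^+_D(v)-\deg^-_D(v)$. Add a new vertex $x$. For every $v$ with $d(v)>0$ add $d(v)$ arcs from $x$ to $v$, subdivided by fresh vertices so the digraph stays simple, and for every $v$ with $d(v)<0$ add $|d(v)|$ subdivided arcs from $v$ to $x$. Next add terminals $u,w$ and the arcs $u\to s_1$, $t_1\to w$, $w\to s_2$, $t_2\to u$, together with balancing arcs so that every old vertex and $u,w$ are balanced; in particular $u$ and $w$ receive extra subdivided connections to and from $x$. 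Set $S=\{x,u,w\}$. Choose $\ell$ equal to the number of arc-disjoint $S$-cycles forced through the balancing structure, so that in any packing of $\ell$ arc-disjoint $S$-cycles every arc at $x$ is used. By the degree count, exactly one cycle must then use the "extra" route $u\to s_1\leadsto t_1\to w\to s_2\leadsto t_2\to u$ inside $D$.

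\textbf{Correctness.} Forward direction: given arc-disjoint paths $P_1,P_2$, build the special cycle from them. Remove its arcs; what remains is Eulerian, and we decompose it into $S$-cycles through $x$, $u$, $w$ using the gadget arcs, after choosing the gadget so that each unit of $x$-traffic can be routed through $u$ and $w$. Converse: take $\ell$ arc-disjoint $S$-cycles and view their union as a circulation. The Flow Decomposition Theorem (Theorem~\ref{Flow Decomposition Theorem}), applied after deleting $x$ (so the remaining flow goes from the out-neighbours of $x$ to its in-neighbours, plus the $u,w$ connectors), together with a counting argument at $u$ and $w$, shows that some cycle traverses $u\to s_1$ and later $w\to s_2$. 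Its subpaths $s_1\leadsto t_1$ and $s_2\leadsto t_2$ inside $D$ are then arc-disjoint, because they lie on a single cycle, and this gives the weak 2-linkage.

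\textbf{Main obstacle.} The difficulty is designing the balancing gadget so that the tight count really forces one cycle to connect $s_1$ to $t_1$ and $s_2$ to $t_2$, rather than, say, $s_1$ to $t_2$. The first thing to try is orienting the gadget so that $t_1$ can reach $w$ only via the arc $t_1\to w$, and $u$ is entered only from $t_2$ or from $x$. Counting in-degree and out-degree at $u$ and $w$ then pins down the routing, and the flow decomposition rules out crossed pairings.
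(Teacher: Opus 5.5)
There is a genuine gap. What you have is a plan rather than a reduction, and the step you yourself call the main obstacle is never carried out.

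\textbf{The routing step.} The balancing gadget, the arcs at $u$ and $w$, and $\ell$ are all left unspecified. Degree counting at $x,u,w$ only tells you that every gadget arc (in particular $\overrightarrow{us_1},\overrightarrow{t_1w},\overrightarrow{ws_2},\overrightarrow{t_2u}$) lies on some cycle of a tight packing. It says nothing about how the cycles are routed inside the instance digraph. The Flow Decomposition Theorem cannot supply this, because it is a single-commodity statement. Decomposing the union of the cycles inside the instance yields paths from the sources $s_1,s_2$ and the balancing out-neighbours of $x$ to the sinks $t_1,t_2$ and the balancing in-neighbours of $x$. You get no control over which source is matched to which sink. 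Concretely, one $S$-cycle may run $u\to s_1\leadsto v'\to x\to v\leadsto t_1\to w\to s_2\leadsto t_2\to u$, with $v'\to x$ and $x\to v$ (subdivided) balancing arcs, and then it yields no $s_1$--$t_1$ path.

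\textbf{Other steps wrong as written.} First, the route $u\to s_1\leadsto t_1\to w\to s_2\leadsto t_2\to u$ that you say is forced is itself a closed cycle avoiding $x$, so it cannot lie on any $S$-cycle. Second, an arc lying on every $S$-cycle would force $\lambda_S^c\le 1$. So your passage to larger $k$ must instead split a terminal into a chain of new terminals joined by many parallel subdivided paths. Third, in the forward direction, a cycle decomposition of the Eulerian remainder need not consist of $S$-cycles, so that step depends on the missing gadget.

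\textbf{Comparison with the paper.} The paper does fix a concrete gadget. The terminals $x_1,\dots,x_k$ sit on a ring of parallel subdivided arcs, and $G$ is attached by $\overrightarrow{x_{k-1}s_1},\overrightarrow{t_1x_k},\overrightarrow{x_ks_2},\overrightarrow{t_2x_1}$. The balancing hubs are attached by $p$ subdivided arcs from $x_{k-2}$ to $s$ and from $t$ to $x_{k-1}$, and $\ell=p+2$, so every terminal has in- and out-degree $p+2$. This treats all $k\ge3$ uniformly and makes the forward direction explicit.

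Its converse, however, merely asserts that the cycle using $\overrightarrow{x_{k-1}s_1}$ contains an $x_{k-1}$--$x_k$ path through $s_1$ and $t_1$. That is exactly the issue you flag, and for $p\ge1$ the assertion is false. Take $k=3$ and let $G$ have arcs $\overrightarrow{s_1t_2},\overrightarrow{s_2a},\overrightarrow{bt_1}$; this is a no-instance, since $s_1$ cannot reach $t_1$. Then $p=1$. The cycle $x_2s_1t_2x_1u_1s\,b\,t_1x_3s_2a\,t\,w_1x_2$ (through the subdivision vertices of $\overrightarrow{sb}$ and $\overrightarrow{at}$) is an $S$-cycle. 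Together with the two cycles using only ring arcs, it gives $3=p+2$ arc-disjoint $S$-cycles.

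So borrowing the paper's gadget will not close your gap. You need an additional idea that stops the balancing traffic from being spliced with the two demand pairs. Note that when $p=0$ the paper's gadget has only two entry points into $G$, and there crossed routings are indeed impossible.
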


\begin{proof}
    Clearly, the problem is in NP. To show the NP-hardness of it, we reduce from the general {\sc weak 2-linkage} problem, which is known to be NP-complete by Theorem~\ref{S. Fortune}.  
    
    Let $[G;s_1, t_1, s_2, t_2]$ be an instance of the general {\sc weak 2-linkage} problem, where $G$ is a digraph and $(s_1,t_1),(s_2,t_2)$ are two terminal pairs of distinct vertices. By adding the two arcs $\overrightarrow{t_1s_1}$ and $\overrightarrow{t_2s_2}$ to $G$, we get a directed multigraph, denoted by $G+H$.

    First, we construct an Eulerian directed multigraph $G'$ from $G+H$. Let $p$ denote the sum of $\deg^+_{G+H}(v)-\deg^-_{G+H}(v)$ over all vertices $v$ for which this difference is positive.

If $p \geq 1$, we add two new vertices $s$ and $t$ to $G+H$. For every $v \in V(G+H)$ with $\deg^+_{G+H}(v) > \deg^-_{G+H}(v)$, we add $\deg^+_{G+H}(v)-\deg^-_{G+H}(v)$ parallel arcs $\overrightarrow{sv}$. For every $v\in V(G+H)$ with $\deg^-_{G+H}(v)>\deg^+_{G+H}(v)$, we add $\deg^-_{G+H}(v)-\deg^+_{G+H}(v)$ parallel arcs $\overrightarrow{vt}$. Finally, add $p$ parallel arcs $\overrightarrow{ts}$. Let $G'$ be the resulting digraph, which is Eulerian (see Figure~\ref{figure1}). If $p=0$, then $G+H$ is already Eulerian, and we simply set $G'=G+H$ (without vertices $s,t$).

In both cases, by the Eulerian property and Theorem~\ref{Flow Decomposition Theorem}, the existence of a pair of arc-disjoint $s_1-t_1$ path and $s_2-t_2$ path in $G$ is equivalent to the existence of $p+2$ arc-disjoint paths in $G'$: one $s_1-t_1$ path, one $s_2-t_2$ path, and $p$ $s-t$ paths.

\begin{figure}[htb]
    \centering 
    \makebox[0pt]{
        \begin{tikzpicture}[scale=1.0, >=latex, every node/.style={font=\normalsize}] 

            \filldraw[black] (1.5, -2.5) circle (1.8pt) node [below=2pt] {$s_{1}$};
            \filldraw[black] (3.0, -2.5) circle (1.8pt) node [below=2pt] {$t_{1}$};
            \filldraw[black] (4.5, -2.5) circle (1.8pt) node [below=2pt] {$s_{2}$};
            \filldraw[black] (6.0, -2.5) circle (1.8pt) node [below=2pt] {$t_{2}$};

            \filldraw[black] (-1.2, -2.5) circle (2.2pt) node [left=2pt, font=\large] {$s$};
            \filldraw[black] (8.2, -2.5)  circle (2.2pt) node [right=2pt, font=\large] {$t$};

            \def\boxleft{0.0}
            \def\boxright{7}
            \def\boxtop{-1.6}
            \def\boxbottom{-3.4}
            \draw[line width=1pt] (\boxleft, \boxtop) rectangle (\boxright, \boxbottom);

            \coordinate (box_left_mid) at (\boxleft, -2.5);
            \coordinate (box_right_mid) at (\boxright, -2.5);

            \draw[thick, -latex, line width=2pt] (-1.2, -2.5) -- (box_left_mid);
            \draw[thick, -latex, line width=2pt] (box_right_mid) -- (8.2, -2.5);
            \draw[thick, black, ->, line width=2pt] (8.2, -2.5) to[out=120, in=60, looseness=0.9] (-1.2, -2.5);

            \draw[black, thick, ->] (3.0, -2.5) to[out=150, in=30, looseness=0.8] (1.5, -2.5);
            \draw[black, thick, ->] (6.0, -2.5) to[out=150, in=30, looseness=0.8] (4.5, -2.5);

            \node at(3.6, -4) {$G'$};
        \end{tikzpicture}
    }
    \caption{The Eulerian digraph $G'$.}
    \label{figure1}
\end{figure}


Secondly, we construct the digraph $D$ from $G'$ and $S$, where $S=\{x_1,\dots,x_k\}$.

\medskip

\noindent\textbf{Case 1: $p \geq 1$.} 
Let
\[
V(D')=V(G')\cup S,
\]
and
\[
A(D')=A(G)\cup A'\cup
\{\overrightarrow{x_{k-1}s_1},\,
\overrightarrow{t_1x_k},\,
\overrightarrow{x_ks_2},\,
\overrightarrow{t_2x_1},\,
\overrightarrow{x_{k-2}s},\,
\overrightarrow{tx_{k-1}}\}
\cup
\{\overrightarrow{x_ix_{i+1}}\mid i\in[k]\},
\]
where $A'=\{\overrightarrow{sv},\overrightarrow{vt}\mid v\in V(G+H)\}$ and $x_{k+1}=x_1$. 
Furthermore, we add $p+1$ additional copies of $\overrightarrow{x_ix_{i+1}}$ for each $i\in[k-3]$, one copy of $\overrightarrow{x_{k-2}x_{k-1}}$, $p$ copies of $\overrightarrow{x_{k-1}x_k}$ and $\overrightarrow{x_kx_1}$, and $p-1$ additional copies of $\overrightarrow{x_{k-2}s}$ and $\overrightarrow{tx_{k-1}}$. 
To avoid parallel arcs, we insert a new vertex $z_{i,i+1}^j$ into each $\overrightarrow{x_ix_{i+1}}$, where $j\in[p+2]$ if $i\in[k-3]$, $j\in[2]$ if $i=k-2$, and $j\in[p+1]$ if $i\in\{k-1,k\}$. 
We also insert $u_i$ ($i\in[p]$) into each $\overrightarrow{x_{k-2}s}$, $w_i$ ($i\in[p]$) into each $\overrightarrow{tx_{k-1}}$, and a distinct new vertex into each arc of $A'$, where the set of these new vertices is denoted by $V_{A'}$.
The resulting digraph, denoted by $D$, is Eulerian (see Figure~\ref{figure2}).

\begin{figure}[htb]
    \centering 
    \makebox[0pt]{
        \begin{tikzpicture}[scale=1.0, >=latex, every node/.style={font=\small}]
    
            \filldraw[black]    (0, 0)  circle (2pt)  node [anchor=north east] {$x_{1}$};
            \filldraw[black]    (0.9, 0)  circle (2pt)  node [anchor=south] {$z^{j}_{1,2}$};
            \filldraw[black]    (1.8, 0)  circle (2pt)  node [anchor=north] {$x_{2}$};
            \filldraw[black]    (3.6, 0)  circle (2pt)  node [anchor=north, xshift=6pt] {$x_{k-2}$};
            \filldraw[black]    (4.5, 0)  circle (2pt)  node [anchor=south] {$z^{j}_{k-2,k-1}$};

            \filldraw[black]    (5.4, 0)  circle (2pt)  node [anchor=north, xshift=3pt, yshift=-5pt] {$x_{k-1}$};
            \filldraw[black]    (6.3, 0)  circle (2pt)  node [anchor=south] {$z^{j}_{k-1,k}$};
            \filldraw[black]    (7.2, 0)  circle (2pt)  node [anchor=north west] {$x_{k}$};

            \coordinate (z_j_kk1) at (3.6, {-9/80*3.6*(3.6-7.2)});
            \filldraw[black]    (z_j_kk1)  circle (2pt)  node [anchor=south] {$z^{j}_{k,k+1}$};

            \filldraw[black]    (1.8, -4.25)  circle (2pt)  node [anchor=north east, yshift=1pt, xshift=2pt] {$s_{1}$};
            \filldraw[black]    (3, -4.25)  circle (2pt)  node [anchor=north east, yshift=1pt, xshift=2pt] {$t_{1}$};
            \filldraw[black]    (4, -4.25)  circle (2pt)  node [anchor=north east, yshift=1pt, xshift=2pt] {$s_{2}$};
            \filldraw[black]    (5.2, -4.25)  circle (2pt)  node [anchor=north west, yshift=1pt, xshift=-1pt] {$t_{2}$};

            \filldraw[black]    (-1.8, -2.4)  circle (2pt)  node [anchor=north east] {$s$};
            \filldraw[black]    (9.0, -2.4)  circle (2pt)  node [anchor=north west] {$t$};

            \filldraw[black]    (0.9, -1.25)  circle (2pt)  node [anchor=north west] {$u_{i}$};
            \filldraw[black]    (7.2, -1.25)  circle (2pt)  node [anchor=north east] {$w_{i}$};

            \def\boxleft{1}    
            \def\boxright{6.2}   
            \def\boxtop{-3}     
            \def\boxbottom{-5.5}  
            \draw[line width=1pt] (\boxleft, \boxtop) rectangle (\boxright, \boxbottom) node [anchor=west] {$G$};

            \coordinate (box_left_mid) at (\boxleft, {(\boxtop+\boxbottom)/2});
            \coordinate (box_right_mid) at (\boxright, {(\boxtop+\boxbottom)/2});

            \draw[densely dotted, line width=1.2pt]      (2.3, 0) -- (3.1, 0);
            \draw[thick, -latex]      (0, 0) -- (0.9, 0);
            \draw[thick, -latex]      (0.9, 0) -- (1.8, 0);
            \draw[thick, -latex]      (3.6, 0) -- (4.5, 0);
            \draw[thick, -latex]      (4.5, 0) -- (5.4, 0);
            \draw[thick, -latex]      (5.4, 0) -- (6.3, 0);
            \draw[thick, -latex]      (6.3, 0) -- (7.2, 0);
            \draw[thick, -latex]      (5.2, -4.25) -- (0, 0);
            \draw[thick, -latex]      (5.4, 0) --  (1.8, -4.25);
            \draw[thick, -latex]      (3, -4.25) -- (7.2, 0);
            \draw[thick, -latex]      (7.2, 0) -- (4, -4.25);
            \draw[thick, -latex]      (3.6, 0)  -- (0.9, -1.25);
            \draw[thick, -latex]      (0.9, -1.25) -- (-1.8, -2.4);
  
            \draw[thick, -latex]      (9.0, -2.4) -- (7.2, -1.25);
            
            \draw[thick, -latex]      (7.2, -1.25) -- (5.4, 0);

            \draw[thick, -latex, line width=2pt] (-1.8, -2.4) -- (box_left_mid);
            \draw[thick, -latex, line width=2pt] (box_right_mid) -- (9.0, -2.4);

            \draw[thick, -latex] plot[smooth, domain=7.2:3.6] (\x, {-9/80*\x*(\x-7.2)});
            \draw[thick, -latex] plot[smooth, domain=3.6:0] (\x, {-9/80*\x*(\x-7.2)});

            \node at(3.6, -6) {$D$};
        \end{tikzpicture}
    }
    \caption{The Eulerian digraph $D$.}
    \label{figure2}
\end{figure}

\medskip

\noindent\textbf{Case 2: $p=0$.} 
Since $G'=G+H$ contains no vertices $s,t$, we set
\[
V(D')=V(G)\cup S,
\]
and
\[
A(D')=A(G)\cup
\{\overrightarrow{x_{k-1}s_1},\,
\overrightarrow{t_1x_k},\,
\overrightarrow{x_ks_2},\,
\overrightarrow{t_2x_1}\}
\cup
\{\overrightarrow{x_ix_{i+1}}\mid i\in[k]\}.
\]
For each $\overrightarrow{x_ix_{i+1}}$ ($i\in[k-2]$), we add one additional copy, and insert different new vertices into each $\overrightarrow{x_ix_{i+1}}$. This yields an Eulerian digraph $D$.

We will show that $\lambda_S^c(D)\geq \ell= p+2$ if and only if there exist a pair of arc-disjoint $s_1-t_1$ path and $s_2-t_2$ path in $G$. Then, together with Theorem~\ref{general weak 2-linkage}, this completes the proof.

Suppose $G$ contains an $s_1-t_1$ path (denoted by $P_1$) and an $s_2-t_2$ path (denoted by $P_2$), which are arc-disjoint.
If $p\geq1$, we also have $p$ arc-disjoint $s-t$ paths (denoted by $P_j$ for $3\leq j\leq p+2$) in $D[V(G)\cup\{s,t\}\cup V_{A'}]$. 
Thus,
\[
\begin{aligned}
C_1 &= x_1z^{1}_{1,2}x_2\dots x_{k-1}s_1P_1t_1x_{k}z^{1}_{k,k+1}x_1, \\
C_2 &= x_1z^{2}_{1,2}x_2\dots x_{k-1}z^{1}_{k-1,k}x_{k}s_2P_2t_2x_1, \\
C_j &= x_1z^{j}_{1,2}x_2\dots x_{k-2}u_{j-2}sP_jtw_{j-2}x_{k-1}z^{j-1}_{k-1,k}x_kz^{j-1}_{k,k+1}x_1,
\end{aligned}
\]
are $p+2$ pairwise arc-disjoint $S$-cycles in $D$.
If $p=0$, the above construction reduces to just $C_1$ and $C_2$ (with no $C_j$ terms), using the arcs available in the $p=0$ construction. 
In either case, these cycles are pairwise arc-disjoint $S$-cycles, so $\lambda_S^c(D)\geq p+2$.

On the other hand, assume that there exists $p+2$ arc-disjoint $S$-cycles in $D$, denoted by \(\{C_i \mid i \in [p+2]\}\). The equalities $\deg^+_D(x_i)=\deg^-_D(x_i)=p+2$ for each $i\in[k]$, which holds in both cases $p\geq 1$ and $p=0$, imply that every $C_i$ includes an $x_{k-1}-x_k$ path and an $x_k-x_1$ path. Consequently, we may select an $x_{k-1}-x_{k}$ path, denote it by $Q_1$, that contains $s_1$ and $t_1$ in some $C_i$, and an $x_{k}-x_{1}$ path, denote it by $Q_2$, that contains $s_2$ and $t_2$ in some $C_j$. When $i=j$, one can easily find a pair of arc-disjoint $s_1-t_1$ path and $s_2-t_2$ path contained within the $S$-cycle $C_i$ in $G$. When $i\neq j$, since $C_i$ and $C_j$ are arc-disjoint, $Q_1$ and $Q_2$ are also arc-disjoint $s_1-t_1$ and $s_2-t_2$ paths in $G$. Therefore, in both cases, $G$ contains a pair of arc-disjoint $s_1-t_1$ path and $s_2-t_2$ path, as desired.\qedhere

\end{proof}

Next, we consider the complexity of $\lambda_S^c(D)$ on planar digraphs. To prove the hardness, we reduce from the {\sc Planar Arc-disjoint Paths (with Two Demand Pairs on Outer Face)} problem, which is formulated as follows: let $D=(V,A)$ be a planar digraph and $(s_1,t_1), (s_2, t_2)$ be two demand pairs such that all terminals of the demand pairs lie on the outer face of a fixed planar embedding of $D$, appearing in the cyclic order $s_1,s_2,t_1,t_2$ along the outer face. The goal is to find $d_1$ $s_1-t_1$ paths, and $d_2$ $s_2-t_2$ paths such that all of these $d_1+d_2$ paths are arc-disjoint. We use $[G; s_1,t_1,d_1; s_2,t_2,d_2]$ to denote an instance of the {\sc Planar Arc-disjoint Paths} problem. In 2012, Naves~\cite{Naves} showed that this problem is NP-complete. Thus, by our reduction, we obtain the result stated in Theorem~\ref{planar digraph}.


\begin{theorem}\label{Mp1}~\cite{Naves}\label{planar arc-disjoint paths}
The problem of {\sc Planar Arc-disjoint Paths (with Two Demand Pairs on Outer Face)} is NP-complete.
\end{theorem}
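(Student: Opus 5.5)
Since the statement is the cited result of Naves~\cite{Naves}, I would prove it by the standard route: membership in NP plus a polynomial reduction from a satisfiability problem.

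\textbf{Membership in NP.} If $d_1+d_2>|A|$ the answer is trivially no, because the required paths are pairwise arc-disjoint. Otherwise a certificate lists $d_1+d_2$ paths, each of length at most $|V|$, so it has polynomial size. Arc-disjointness and the endpoints can then be checked in polynomial time.

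\textbf{Reduction.} I would reduce from 3-SAT. Given a formula with variables $v_1,\dots,v_n$ and clauses $c_1,\dots,c_m$, I would build a planar grid-like digraph whose outer face carries $s_1$ on the left, $s_2$ on top, $t_1$ on the right and $t_2$ on the bottom. This matches the cyclic order $s_1,s_2,t_1,t_2$. The $s_2$--$t_2$ demand paths run vertically and the $s_1$--$t_1$ demand paths run horizontally.
\begin{itemize}
\item \emph{Crossings.} Because the paths need only be arc-disjoint, not vertex-disjoint, two paths can cross planarly at a vertex of in- and out-degree $2$. This is what makes a grid with two demand pairs on the outer face expressive.
\item \emph{Variable gadgets.} Each variable $v_i$ gets a vertical channel. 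Its bundle of $s_2$--$t_2$ paths is forced into one of two routings, ``left'' (true) or ``right'' (false). Each routing consumes a prescribed set of crossing arcs along the whole channel.
\item \emph{Clause gadgets.} Each clause $c_j$ gets a horizontal strip carrying one extra $s_1$--$t_1$ path. Where the strip meets the channel of a literal of $c_j$, it has a bypass. This bypass is free exactly when that literal's routing leaves it unused.
\item \emph{Choice of demands.} I would set $d_1$ and $d_2$ to the total number of horizontal and vertical paths, respectively.
\end{itemize}

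\textbf{Correctness.} Given a satisfying assignment, routing each channel accordingly and sending each clause path through a satisfied literal's bypass gives the required arc-disjoint paths. For the converse, I would use tightness. I would choose capacities (via parallel subdivided arcs) so that the cut conditions hold with equality: every cut separating $s_1$ from $t_1$, or $s_2$ from $t_2$, along the grid lines has exactly $d_1+d_2$ arcs, or exactly the number of paths that must cross it. Then every arc of these cuts is used. A counting argument, in the spirit of the flow-decomposition argument used for Theorem~\ref{Eulerian digraph}, then shows that no path shortcuts across gadgets. Hence each channel is in one of its two canonical states, which defines a truth assignment. Each clause path then must use some free bypass, so every clause contains a true literal.

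\textbf{Main obstacle.} I expect the hardest step to be the converse: ruling out non-canonical routings, where a path partially uses one routing and partially the other, or where paths exchange roles at a crossing. My first attempt would be to make the instance ``Eulerian after adding demand arcs'' $\overrightarrow{t_1s_1}$ and $\overrightarrow{t_2s_2}$ with multiplicities $d_1$ and $d_2$. Then any feasible solution must use every arc, and arc-counting at each gadget's boundary forces the canonical local behaviour. Planarity also prevents horizontal and vertical paths from crossing more than once per gadget.
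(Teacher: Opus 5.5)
\textbf{Verdict: there is a genuine gap.} The hardness part is an outline, and the step that carries all the difficulty is missing.

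Your NP-membership argument is correct. Discarding the case $d_1+d_2>|A|$ is exactly what keeps the certificate polynomial when the demands are written in binary.

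The paper itself does not prove this statement; it imports it from Naves~\cite{Naves}, whose proof is an explicit construction together with an argument that the intended routings are forced. In your sketch, the properties \emph{the bundle of $v_i$ is forced into one of two routings along the whole channel} and \emph{a bypass is free exactly when the literal's routing leaves it unused} are asserted, not derived.

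Worse, with only two demand pairs, all $s_2$--$t_2$ paths are interchangeable, and so are all $s_1$--$t_1$ paths. Nothing identifies a path as belonging to channel $v_i$ or to clause $c_j$. A path may enter one channel or strip and leave through another. A channel may also switch state between two consecutive clause strips, which breaks the consistency of the truth assignment. Ruling out this migration is the central obstacle in two-commodity hardness proofs, and your sketch assumes it away.

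The tools you propose for the converse do not close the gap:
\begin{enumerate}
\item Making $G$ Eulerian after adding $d_i$ copies of $\overrightarrow{t_is_i}$ does not force a feasible solution to use every arc. The unused arcs only form a balanced subdigraph, i.e.\ a union of cycles. You would also need $G$ to be acyclic.
\item Even when every arc must be used, counting only fixes the load on each arc and across each gadget boundary. It determines neither which commodity uses a given arc nor how the two in-arcs are paired with the two out-arcs at a crossing vertex (straight through versus turning). Both pairings give identical counts, and that pairing is exactly what separates canonical from non-canonical routings.
\item Planarity by itself does not limit how often two arc-disjoint paths cross. They can share, and cross at, arbitrarily many vertices; topological non-crossing arguments apply only to vertex-disjoint paths.
\end{enumerate}

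So the soundness direction, which you rightly identify as the main obstacle, is still open. To make this a proof you would need a concrete gadget construction with a real argument that the intended routings are forced, as Naves gives. Otherwise, simply invoke the cited theorem, as the paper does.
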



\begin{theorem}\label{planar digraph}
Fix an integer $k \geq 2$ and let $\ell$ be part of the input. Let $D$ be a planar digraph and $S\subseteq V(D)$ with $|S|=k$. The problem of deciding whether \(\lambda _S^c(D) \geq \ell\) is NP-complete.
\end{theorem}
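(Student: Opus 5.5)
Membership in NP is immediate: a certificate is a family of $\ell$ cycles, and we can check in polynomial time that they are pairwise arc-disjoint and that each contains $S$. For NP-hardness I will reduce from the {\sc Planar Arc-disjoint Paths (with Two Demand Pairs on Outer Face)} problem, which is NP-complete by Theorem~\ref{planar arc-disjoint paths}. Let $[G; s_1,t_1,d_1; s_2,t_2,d_2]$ be an instance with a fixed planar embedding in which $s_1,s_2,t_1,t_2$ appear in this cyclic order on the outer face. I will set $\ell=d_1+d_2$ and build a planar digraph $D\supseteq G$ together with a terminal set $S=\{x_1,\dots,x_k\}$. In $D$, every $S$-cycle must traverse $G$ exactly once, either from $s_1$ to $t_1$ or from $s_2$ to $t_2$. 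Exactly $d_1$ cycles will be forced to use the first route and $d_2$ the second.

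\textbf{Gadget.} Place new vertices $x_1,\dots,x_k$ in the outer face.
\begin{itemize}
\item Add a directed path (``spine'') $x_1\to x_2\to\cdots\to x_{k-1}$ carried by $\ell$ internally disjoint subdivided copies.
\item Add $d_1$ subdivided arcs $x_{k-1}\to s_1$ and $d_1$ subdivided arcs $t_1\to x_k$.
\item Add $d_2$ subdivided arcs $x_{k-1}\to s_2$ and $d_2$ subdivided arcs $t_2\to x_k$.
\item Add $\ell$ subdivided arcs $x_k\to x_1$.
\end{itemize}
For $k=2$ the spine is empty and $x_1$ plays the role of $x_{k-1}$. Subdividing every added arc keeps $D$ simple. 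Then $\deg^+(x_{k-1})=\deg^-(x_k)=\ell$, so each of the $\ell$ arc-disjoint $S$-cycles uses exactly one arc out of $x_{k-1}$ and one arc into $x_k$. Since the only way to leave $x_{k-1}$ other than back along the spine is into $G$, and the only way into $x_k$ is from $t_1$ or $t_2$, every $S$-cycle contains an $s_i$--$t_j$ path inside $G$. The number of cycles entering via $s_1$ is at most $d_1$, and via $s_2$ at most $d_2$.

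\textbf{Main obstacle: planarity and the crossing routes.} Two difficulties arise here.
\begin{enumerate}
\item The gadget must be drawable in the outer face even though the cyclic order is $s_1,s_2,t_1,t_2$. I will try drawing $x_{k-1}$ near the arc of the outer boundary between $t_2$ and $s_1$, and $x_k$ near the arc between $s_2$ and $t_1$. Then $x_{k-1}$ attaches to $s_1,s_2$ and $x_k$ attaches to $t_1,t_2$. The required attachments are to consecutive pairs in the cyclic order $t_2,s_1,s_2,t_1$ (namely $x_{k-1}$ to $s_1,s_2$ and $x_k$ to $t_1,t_2$), which I expect to be drawable without crossings. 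The return arcs $x_k\to x_1$ and the spine then run around the outside. I expect this to be fine, but it must be checked against the given order. If it fails, I will instead attach the sources and sinks alternately: $x_{k-1}$ to $s_1$ and $t_2$-side, and a new terminal route that enters $s_2$.
\item A cycle might use an $s_1$--$t_2$ or $s_2$--$t_1$ path, which is a wrong pairing. To rule this out I will separate the entries: replace the common sink $x_k$ by two stages. Cycles from $t_1$ enter $x_k$, and cycles from $t_2$ enter via a vertex $y$ with $d_2$ arcs $y\to x_k$. With $\deg$-counting at $s_i,t_i$ this forces each of the $d_1$ cycles entering at $s_1$ to exit at $t_1$. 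If the counting does not suffice, I will use Menger/flow counting together with the fact that planar crossing routes $s_1\to t_2$ and $s_2\to t_1$ can be rerouted by an uncrossing argument (swap tails at a common vertex) to produce correctly paired arc-disjoint paths.
\end{enumerate}

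\textbf{Correctness.} Forward direction: given the $d_1+d_2$ arc-disjoint paths in $G$, close each one through its own spine copy and its own gadget arcs. This gives $\ell$ arc-disjoint $S$-cycles, so $\lambda_S^c(D)\ge\ell$. Backward direction: given $\ell$ arc-disjoint $S$-cycles, the degree counts above show each cycle contributes one path in $G$ with the correct pairing, giving a solution to the instance. The construction is clearly polynomial, which completes the reduction.
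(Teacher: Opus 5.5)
There is a genuine gap in the backward direction, and it cannot be patched within your gadget. Both sources $s_1,s_2$ hang off the same vertex $x_{k-1}$, and both sinks $t_1,t_2$ feed the same vertex $x_k$. So an $S$-cycle may cross $G$ along an $s_1$--$t_2$ or an $s_2$--$t_1$ path just as well as along a correctly paired one.

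Degree counting only fixes how many cycles enter at each $s_i$ and how many leave at each $t_j$. It never fixes which $s_i$ is matched with which $t_j$. Inserting the extra vertex $y$ before $x_k$ changes nothing, because an $s_1$--$t_2$ path simply continues $t_2\to y\to x_k$.

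The uncrossing fallback goes the wrong way. In the cyclic order $s_1,s_2,t_1,t_2$ the pairs $\{s_1,t_2\}$ and $\{s_2,t_1\}$ are consecutive on the outer face. Hence wrongly paired paths need not share any vertex; it is the correctly paired ones that are forced to meet. For example, let $G$ be a directed $s_1$--$t_2$ path together with a vertex-disjoint directed $s_2$--$t_1$ path, with $d_1=d_2=1$. This is a NO-instance, since there is no $s_1$--$t_1$ path, yet your $D$ contains two arc-disjoint $S$-cycles.

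More fundamentally, with merged sources and merged sinks, deciding $\lambda_S^c(D)\ge \ell$ for your $D$ is a single-commodity integral flow problem: supplies $d_1,d_2$ at $s_1,s_2$, demands $d_1,d_2$ at $t_1,t_2$, and unit arc capacities. That is solvable in polynomial time, so no counting argument can turn this gadget into a valid reduction.

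The missing idea is to build the gadget so that a wrongly paired route closes up before all of $S$ has been visited. The paper's construction is as follows.
\begin{itemize}
\item It attaches $s_1$ and $t_2$ to the same terminal $x_1$: $d_1$ subdivided arcs $x_1\to s_1$ and $d_2$ subdivided arcs $t_2\to x_1$.
\item It attaches $s_2$ and $t_1$ to the same terminal $x_k$: $d_2$ arcs $x_k\to s_2$ and $d_1$ arcs $t_1\to x_k$.
\item It links $x_1$ and $x_k$ by two oppositely directed spines: $d_2$ subdivided copies of $x_1\to x_2\to\cdots\to x_k$ and $d_1$ copies of $x_k\to x_{k-1}\to\cdots\to x_1$.
\end{itemize}
Now an $s_1$--$t_2$ route leads straight back to $x_1$ without meeting $x_k$, and an $s_2$--$t_1$ route leads back to $x_k$, so neither lies on an $S$-cycle. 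Since $\deg^+(x_1)=\deg^-(x_1)=d_1+d_2$, counting at $x_1$ gives exactly $d_1$ arc-disjoint $s_1$--$t_1$ paths and $d_2$ arc-disjoint $s_2$--$t_2$ paths.

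For $k=2$ one must also allow cycles $x_1\to s_1\cdots t_1\to x_2\to s_2\cdots t_2\to x_1$. Each contains one correctly paired path of each kind. The count at $x_1$ shows they are balanced by an equal number of cycles using only the two spines, so the conclusion survives.

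This attachment pattern is also exactly what keeps $D$ planar, because $\{t_2,s_1\}$ and $\{s_2,t_1\}$ are consecutive on the outer face. Your fallback of attaching $x_{k-1}$ to ``$s_1$ and the $t_2$-side'' points in this direction. However, you offer it only as a planarity fix and do not develop it into the mechanism that enforces the pairing.
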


\begin{proof}
It is easy to see that the problem is in NP. We will show that the problem is NP-hard by a reduction from the problem of {\sc Planar Arc-disjoint Paths (with Two Demand Pairs on Outer Face)}, as the latter one is NP-complete by Theorem~\ref{Mp1}.

Let $[G; s_1,t_1,d_1; s_2,t_2,d_2]$ be any instance of the problem of {\sc Planar Arc-disjoint Paths (with Two Demand Pairs on Outer Face)}. Let  \(S=\{x_i \mid i \in [k]\}\). We construct  a new planar digraph as follows. Let \[V(D')=V(G) \cup S, \] and let \[A(D')=A(G) \cup \{\overrightarrow { x_ix_{i+1}},  \overrightarrow { x_{i+1}x_i}\mid i\in [k-1]\} \cup \{\overrightarrow {x_1s_1}, \overrightarrow {x_ks_2}, \overrightarrow {t_2x_1}, \overrightarrow {t_1x_k}\}.\] Furthermore, we add \(d_1-1\) copies of arcs \(\overrightarrow{x_{i+1}x_i}\) for each \(i\in[k-1]\), \(\overrightarrow{x_1s_1}\), and \(\overrightarrow{t_1x_k}\), and \(d_2-1\) copies of arcs \(\overrightarrow{x_i x_{i+1}}\) for each \(i\in[k-1]\), \(\overrightarrow{x_k s_2}\), and \(\overrightarrow{t_2 x_1}\). To avoid parallel arcs, for each $a\in[d_1]$, we insert a new vertex $q_{i+1,i}^a$ on each $\overrightarrow {x_{i+1}x_i}$, a vertex $e_a$ on each $\overrightarrow {x_1s_1}$, and a vertex $e_a'$ on each $\overrightarrow {t_1x_k}$. Similarly, for each $b\in[d_2]$, we insert a new vertex $p_{i,i+1}^b$ on each $\overrightarrow {x_ix_{i+1}}$, a vertex $f_b$ on each $\overrightarrow {x_ks_2}$, and a vertex $f_b'$ on each $\overrightarrow {t_2x_1}$. Let $D$ denote the resulting digraph illustrated in Figure~\ref{figure3}, which can be readily checked to be planar.

\begin{figure}[htb]
    \centering 
    \makebox[0pt]{
        \begin{tikzpicture}[scale=1.0, >=latex, every node/.style={font=\small}]
    
            \filldraw[black]    (0, 0)  circle (2pt)  node [anchor=north east] {$x_{1}$};
            \filldraw[black]    (1.8, 0)  circle (2pt)  node [anchor=north] {$x_{2}$};
            \filldraw[black]    (3.6, 0)  circle (2pt)  node [anchor=north, xshift=-4pt] {$x_{k-2}$};
            \filldraw[black]    (5.4, 0)  circle (2pt)  node [anchor=north, yshift=-4pt] {$x_{k-1}$};
            \filldraw[black]    (7.2, 0)  circle (2pt)  node [anchor=north west] {$x_{k}$};

            \filldraw[black] (0.9, 0.2025) circle (2pt) node[anchor=south] {$p_{1,2}^b$};
            \draw[thick, -latex] plot[smooth, domain=0:0.9] (\x, {-0.25*\x*(\x-1.8)});
            \draw[thick, -latex] plot[smooth, domain=0.9:1.8] (\x, {-0.25*\x*(\x-1.8)});

            \filldraw[black] (4.5, 0.2025) circle (2pt) node[anchor=south] {$p_{k-2,k-1}^b$};
            \draw[thick, -latex] plot[smooth, domain=3.6:4.5] (\x, {-0.25*(\x-3.6)*(\x-5.4)});
            \draw[thick, -latex] plot[smooth, domain=4.5:5.4] (\x, {-0.25*(\x-3.6)*(\x-5.4)});

            \filldraw[black] (6.3, 0.2025) circle (2pt) node[anchor=south] {$p_{k-1,k}^b$};
            \draw[thick, -latex] plot[smooth, domain=5.4:6.3] (\x, {-0.25*(\x-5.4)*(\x-7.2)});
            \draw[thick, -latex] plot[smooth, domain=6.3:7.2] (\x, {-0.25*(\x-5.4)*(\x-7.2)});

            \filldraw[black] (0.9, -0.2025) circle (2pt) node[anchor=north] {$q_{2,1}^a$};
            \draw[thick, -latex] plot[smooth, domain=1.8:0.9] (\x, {0.25*\x*(\x-1.8)});
            \draw[thick, -latex] plot[smooth, domain=0.9:0] (\x, {0.25*\x*(\x-1.8)});

            \filldraw[black] (4.5, -0.2025) circle (2pt) node[anchor=north] {$q_{k-1,k-2}^a$};
            \draw[thick, -latex] plot[smooth, domain=5.4:4.5] (\x, {0.25*(\x-3.6)*(\x-5.4)});
            \draw[thick, -latex] plot[smooth, domain=4.5:3.6] (\x, {0.25*(\x-3.6)*(\x-5.4)});

            \filldraw[black] (6.3, -0.2025) circle (2pt) node[anchor=north, xshift=3pt] {$q_{k,k-1}^a$};
            \draw[thick, -latex] plot[smooth, domain=7.2:6.3] (\x, {0.25*(\x-5.4)*(\x-7.2)});
            \draw[thick, -latex] plot[smooth, domain=6.3:5.4] (\x, {0.25*(\x-5.4)*(\x-7.2)});

            \def\boxleft{1.2}    
            \def\boxright{6}   
            \def\boxtop{-2.0}     
            \def\boxbottom{-3.6}  
            \draw[line width=1pt] (\boxleft, \boxtop) rectangle (\boxright, \boxbottom) node[anchor=west] {$G$};
            
            \draw[densely dotted, line width=1.2pt] (2.2, 0) -- (3.2, 0);

            \filldraw[black] (2, -2.5) circle (2pt) node[anchor=south] {$s_1$};
            \filldraw[black] (5.2, -2.5) circle (2pt) node[anchor=south] {$s_2$};
            \filldraw[black] (2, -3.1) circle (2pt) node[anchor=north] {$t_2$};
            \filldraw[black] (5.2, -3.1) circle (2pt) node[anchor=north] {$t_1$};

            \coordinate (ea) at (0.7, -1.35); 
            \draw[thick, -latex] (0,0) to[bend right=10] (ea);
            \draw[thick, -latex] (ea) to[bend right=15] (2,-2.5);
            \filldraw[black] (ea) circle (2pt) node[anchor=west] {$e_a$};

            \coordinate (fb') at (0.6, -2);  
            \draw[thick, -latex] (fb') to[bend right=-10] (0,0);
            \draw[thick, -latex] (2,-3.1) to[bend right=-15] (fb');
            \filldraw[black] (fb') circle (2pt) node[anchor=east] {$f_b'$};

            \coordinate (ea') at (6.6, -2); 
            \draw[thick, -latex] (ea') to[bend right=10] (7.2, 0);
            \draw[thick, -latex] (5.2,-3.1) to[bend right=15] (ea');
            \filldraw[black] (ea') circle (2pt) node[anchor=
            west] {$e_a'$};

            \coordinate (fb) at (6.4, -1.35);  
            \draw[thick, -latex] (7.2,0) to[bend right=-12] (fb);
            \draw[thick, -latex] (fb) to[bend right=-15] (5.2, -2.5);
            \filldraw[black] (fb) circle (2pt) node[anchor=east] {$f_b$};
        \end{tikzpicture}
        }
    \caption{The planar digraph $D$.}
    \label{figure3}
\end{figure}

We will show that $\lambda_S^c(D) \geq \ell = d_1 + d_2$ if and only if there exist $d_1$ $s_1-t_1$ paths $P_a$ for $a\in [d_1]$ and $d_2$ $s_2-t_2$ paths $Q_b$ for $b\in [d_2]$ in $G$ that are arc-disjoint.

If there exist $d_1$ $s_1-t_1$ paths $P_a$ for $a\in [d_1]$ and $d_2$ $s_2-t_2$ paths $Q_b$ for $b\in [d_2]$ which are pairwise arc-disjoint in $G$, then \[C_a=x_1e_as_1P_at_1e_a'x_kq_{k,k-1}^ax_{k-1} \dots x_2q_{2,1}^ax_1, a\in [d_1],\] \[C_b'=x_1p_{1,2}^bx_2 \dots x_{k-1} p_{k-1,k}^b x_k f_bs_2Q_bt_2f_b'x_1, b\in [d_2]\] are $d_1+d_2$ arc-disjoint $S$-cycles in $D$. Thus, \(\lambda _S^c(D) \geq \ell\).

Conversely, suppose there exist $d_1+d_2$ arc-disjoint $S$-cycles in $D$, denoted by $C_i$ for $i\in[d_1+d_2]$. Since each $S$-cycle contains both an $x_1-x_k$ path and an $x_k-x_1$ path, the $d_1+d_2$ $S$-cycles provide $d_1+d_2$ paths of each type. On the other hand, since $\deg_D^+(x_1)=\deg_D^-(x_1)=d_1+d_2$, all arcs incident with $x_1$ are exactly those used by these paths, and among these paths, exactly $d_1$ are $s_1-t_1$ paths and $d_2$ are $s_2-t_2$ paths that pass through $G$. Moreover, each such $x_1-x_k$ path must contain an $s_1-t_1$ path, since each crossing arc of $G$ (the arc with exactly one end-vertex in $V(G)$) leaves from $t_1$ or $t_2$, and the crossing arc from $t_2$ can only return to $x_1$, which cannot form an $S$-cycle. Similarly, each of the $d_2$ $x_k-x_1$ paths must contain an $s_2-t_2$ path. Therefore, we obtain $d_1$ $s_1-t_1$ paths and $d_2$ $s_2-t_2$ paths that are pairwise arc-disjoint in $G$. 

By Theorem~\ref{planar arc-disjoint paths}, for fixed $k\geq2$ and $\ell$ is part of the input, the problem of deciding whether $\lambda_S^c(D)\geq \ell$ on planar digraphs is NP-complete.
\end{proof}

The {\sc Hamiltonian cycle} problem is a classic problem in graph theory and it is defined as follows: given a graph $G$, the aim is to find a Hamiltonian cycle in $G$. It is known that the {\sc Hamiltonian cycle} problem is NP-complete for general undirected graphs~\cite{Karp}, planar graphs~\cite{M. R. Garey} and $d$-regular graphs where $d\geq 3$~\cite{C. Picouleau}. Using an appropriate reduction from the {\sc Hamiltonian cycle} problem to our problem, we can transform an instance of the {\sc Hamiltonian cycle} problem into an instance of our problem, thereby proving that deciding $\lambda_S^c(D) \ge \ell$ remains NP-complete on symmetric digraphs, planar digraphs and Eulerian digraphs when $\ell$ is fixed and $k$ is part of the input.

\begin{theorem}~\cite{Karp}\label{Hamiltonian cycle problem}
The {\sc Hamiltonian cycle} problem in undirected graphs is NP-complete.
\end{theorem}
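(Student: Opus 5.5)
This result is classical and is quoted from \cite{Karp}, so it could simply be cited. A self-contained proof would go as follows. Membership in NP is immediate: a certificate is a cyclic ordering of $V(G)$, and we check in linear time that consecutive vertices are adjacent and that every vertex occurs exactly once. For NP-hardness I would follow Karp's chain of reductions in two stages. First, reduce 3-SAT to the \emph{directed} Hamiltonian cycle problem. Then reduce the directed version to the undirected one by a local vertex-splitting gadget.

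\textbf{Stage 1: 3-SAT to directed Hamiltonian cycle.} Let the formula have variables $y_1,\dots,y_n$ and clauses $c_1,\dots,c_m$.
\begin{itemize}
\item For each variable $y_i$, build a doubly linked horizontal chain of $3m+3$ vertices, with arcs in both directions between consecutive vertices. Traversing the chain left-to-right will encode $y_i=\text{true}$, and right-to-left will encode $y_i=\text{false}$.
\item Chain the variable gadgets together: each end of gadget $i$ gets an arc to both ends of gadget $i+1$, and gadget $n$ returns to gadget $1$ through a start vertex.
\item For each clause $c_j$, add one vertex $c_j$. Reserve the pair of adjacent chain positions $3j,3j+1$ in each gadget $i$.
\item If $y_i\in c_j$, add arcs from position $3j$ to $c_j$ and from $c_j$ to position $3j+1$, so that $c_j$ can be visited as a detour during a left-to-right pass. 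If $\bar y_i\in c_j$, add the reverse detour, usable during a right-to-left pass.
\end{itemize}
A satisfying assignment gives a Hamiltonian cycle: traverse each chain in its assigned direction and pick up each clause vertex via one true literal.

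The converse is the main obstacle. One must show that any Hamiltonian cycle that leaves chain $i$ at position $3j$ to visit $c_j$ must return to position $3j+1$ of the \emph{same} chain. The separator positions ensure this: otherwise some chain vertex is left with no unused entering or leaving neighbour. From this, each chain is traversed monotonically, so each chain's direction defines a truth assignment, and every clause vertex is visited through a true literal.

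\textbf{Stage 2: directed to undirected.} Given a digraph $H$, replace each vertex $v$ by a path $v^{\mathrm{in}}v^{\mathrm{mid}}v^{\mathrm{out}}$. Replace each arc $\overrightarrow{uv}$ by the edge $u^{\mathrm{out}}v^{\mathrm{in}}$. Since $v^{\mathrm{mid}}$ has degree $2$, any Hamiltonian cycle of the new graph must use both of its edges. The cycle therefore passes through each triple consecutively, and the orientations of the triples must be globally consistent, because an $\mathrm{out}$-vertex is adjacent only to $\mathrm{in}$-vertices outside its own triple. Contracting the triples then gives a bijection between directed Hamiltonian cycles of $H$ and Hamiltonian cycles of the new undirected graph.

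Both constructions are clearly polynomial in size. Composing them gives the NP-completeness of the {\sc Hamiltonian cycle} problem in undirected graphs.
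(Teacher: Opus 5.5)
Your proposal is correct. It does more than the paper, which gives no argument at all: the statement is imported from \cite{Karp} and used only as a black box in the proof of Theorem~\ref{symmetric digraph}, so a citation is all the paper needs.

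Compared with Karp's actual proof, your Stage~2 (splitting each vertex into $v^{\mathrm{in}}v^{\mathrm{mid}}v^{\mathrm{out}}$ and turning $\overrightarrow{uv}$ into the edge $u^{\mathrm{out}}v^{\mathrm{in}}$) is exactly Karp's final step. Your Stage~1 is not: Karp's chain runs SAT $\to$ Clique $\to$ Node Cover $\to$ directed Hamiltonian circuit, using a node-cover gadget. You instead use the later textbook 3-SAT construction with bidirected variable chains and clause detours, as in Kleinberg--Tardos. Saying you ``follow Karp's chain'' is therefore a misattribution, though it does not affect the mathematics.

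The 3-SAT route has a short, transparent correctness proof. The one step you leave vague can be made precise as follows. Write $v_{i,r}$ for position $r$ of chain $i$, and suppose the cycle uses $v_{i,3j}\to c_j$ but the successor of $c_j$ is not $v_{i,3j+1}$.
\begin{enumerate}
\item The in-arc of $v_{i,3j+1}$ can then come only from $v_{i,3j+2}$.
\item Its out-arc must go to $v_{i,3j}$, since returning to $v_{i,3j+2}$ would close a $2$-cycle.
\item Now consider the separator $v_{i,3j-1}$, whose only neighbours are $v_{i,3j-2}$ and $v_{i,3j}$. It cannot use $v_{i,3j}$ as predecessor, because the successor of $v_{i,3j}$ is $c_j$. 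It cannot use $v_{i,3j}$ as successor, because the predecessor of $v_{i,3j}$ is $v_{i,3j+1}$. This is a contradiction.
\end{enumerate}
The negated-literal case is symmetric, with $v_{i,3j+2}$ as the stuck separator. After contracting each detour to a chain arc, the monotone traversal of every chain, and hence the truth assignment, follows as you describe.
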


We now state the theorem for symmetric digraphs (where $k$ is part of the input) and provide its proof.


\begin{theorem}\label{symmetric digraph}
Fix an integer $\ell \geq 1$ and let $k$ be part of the input. For a symmetric digraph $D$ and a subset $S\subseteq V(D)$ with $|S|=k$, deciding whether $\lambda_S^c(D)\geq \ell$ is NP-complete.
\end{theorem}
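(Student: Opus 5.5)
The plan is to reduce from the {\sc Hamiltonian cycle} problem in undirected graphs (Theorem~\ref{Hamiltonian cycle problem}), taking $S$ to be essentially the whole vertex set of the input graph. That way an $S$-cycle in a digraph built only on $V(G)$ is forced to be a directed Hamiltonian cycle. Membership in NP is clear, since $\ell$ cycles together with the check that they are arc-disjoint and contain $S$ form a polynomial certificate.

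\textbf{Base case.} Let $G$ be an instance with $n\ge 3$ vertices. Let $D$ be the symmetric digraph obtained by replacing every edge $uv$ of $G$ by the two arcs $\overrightarrow{uv}$ and $\overrightarrow{vu}$, and set $S=V(G)$, so $k=n$ is part of the input. Every $S$-cycle of $D$ is a directed Hamiltonian cycle of $D$. Because $n\ge 3$, its underlying edges form a Hamiltonian cycle of $G$; the directed $2$-cycles $uvu$ are the only degenerate case, and they cannot contain all of $S$. Conversely, a Hamiltonian cycle $C$ of $G$ yields two arc-disjoint $S$-cycles in $D$, namely $C$ traversed in its two orientations. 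Hence for $\ell\in\{1,2\}$ we get: $\lambda_S^c(D)\ge \ell$ if and only if $G$ is Hamiltonian.

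\textbf{General fixed $\ell\ge 3$.} Here the same instance does not suffice, so I would pad $D$ with $\ell-2$ extra $S$-cycles that use only new arcs. I would add $t=\lceil(\ell-2)/2\rceil$ symmetric ``rings''. The $j$-th ring is built by fixing a cyclic order $x_1,\dots,x_n$ of $S$, introducing new vertices $y^j_1,\dots,y^j_n$, and adding both arcs of each of $x_iy^j_i$ and $y^j_ix_{i+1}$, with indices taken mod $n$. Each ring is symmetric and contributes two arc-disjoint $S$-cycles. So if $G$ is Hamiltonian, we obtain $2+2t\ge \ell$ arc-disjoint $S$-cycles, and the resulting digraph is still symmetric and of polynomial size.

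\textbf{Main obstacle: the converse for $\ell\ge 3$.} A packing of $\ell$ arc-disjoint $S$-cycles may mix ring arcs with arcs of $G$. It could therefore produce extra cycles even when $G$ has no Hamiltonian cycle, by using ring paths $x_iy^j_ix_{i+1}$ as shortcuts. I would first try a counting argument at the vertices of $S$. Each ring offers every $x_i$ exactly $2$ in-arcs and $2$ out-arcs, so the rings together can support at most $2t$ ``visits'' to each $x_i$. I would then show that a cycle using ring arcs at one terminal is forced to use ring arcs at every terminal, because of the rigid in/out structure of the $y$-vertices. This would give at most $2t$ cycles touching the rings, so some cycle lies entirely in the symmetric copy of $G$ and is Hamiltonian.

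If this propagation claim fails, my fallback is to choose the ring order $x_1,\dots,x_n$ so that each cyclic-consecutive pair $x_i,x_{i+1}$ is non-adjacent in $G$, or else to subdivide the ring arcs further, in order to block mixed cycles. Failing both, I would reduce instead from Hamiltonian cycle in $d$-regular graphs~\cite{C. Picouleau}, so that degree counting at $S$ controls the number of cycles. I expect this converse direction to be the genuinely delicate part of the proof.
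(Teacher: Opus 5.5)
Your treatment of $\ell\in\{1,2\}$ via $\overleftrightarrow{G}$ with $S=V(G)$ is correct. For $\ell\ge 3$, however, there is a genuine gap, and the ring construction is in fact unsound, not merely unproved.

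The propagation claim fails. The vertex $y^j_i$ has out-arcs to both $x_i$ and $x_{i+1}$, so a cycle can traverse $x_iy^j_ix_{i+1}$ and then leave the ring along an arc of $\overleftrightarrow{G}$. Each ring segment therefore acts as an extra edge $x_ix_{i+1}$, usable once in each direction, whether or not $x_ix_{i+1}\in E(G)$.

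Here is a concrete counterexample. Take $\ell\in\{3,4\}$, so $t=1$. Let $G$ be two copies of $K_6$ glued at a cut vertex $v$, with vertex sets $A=\{x_2,a,b,c,d,v\}$ and $B=\{x_1,x_3,e,f,g,v\}$. Then $G$ is non-Hamiltonian. Yet it has the edge-disjoint Hamiltonian paths $P=x_2abcdvefgx_3x_1$ (from $x_2$ to $x_1$) and $P'=x_3egx_1fvbdacx_2$ (from $x_3$ to $x_2$). The cycles $x_1y^1_1x_2Px_1$, its reverse, $x_2y^1_2x_3P'x_2$, and its reverse are four pairwise arc-disjoint $S$-cycles. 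So $\lambda_S^c(D)\ge\ell$ even though $G$ has no Hamiltonian cycle.

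Your fallbacks do not repair this. The two shortcut pairs used here, $x_1x_2$ and $x_2x_3$, are already non-adjacent in $G$. Subdividing the ring arcs changes nothing about the shortcut mechanism. The $d$-regular idea is not developed into an actual argument.

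The missing idea is to put the multiplicity inside $G$ rather than beside it, so that no new $S$-cycles are created. Replace each edge $x_ix_j$ of $G$ by $\ell$ internally disjoint paths $x_iv^a_{i,j}x_j$ ($a\in[\ell]$), replace every resulting edge by two opposite arcs, and keep $S=V(G)$. Now the vertices of $S$ are pairwise non-adjacent in $D$, and each $v^a_{i,j}$ links only $x_i$ and $x_j$. Hence, for $n\ge 3$, any single $S$-cycle projects to a Hamiltonian cycle of $G$, so the converse needs only one cycle. In the other direction, a Hamiltonian cycle $x_1\cdots x_nx_1$ of $G$ lifts to the $\ell$ arc-disjoint $S$-cycles $x_1v^a_{1,2}x_2\cdots x_nv^a_{n,1}x_1$. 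This is the paper's reduction, and it handles all $\ell\ge 1$ uniformly, with no separate base case.
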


\begin{proof}
Let $G=(V(G),E(G))$ be an undirected graph of order $n$ with vertex set $V(G)=\{x_i\mid i\in[n]\}$.
We construct a symmetric digraph $D$ as follows. For each $e=x_ix_j\in E(G)$, we add $\ell-1$ additional copies. To avoid parallel edges, we insert $v_{i,j}^a$  ($a\in [\ell]$) on each edge $x_ix_j$. Then we replace each edge with two arcs in opposite directions. That is, we replace each edge $x_ix_j$ in $G$ by the structure shown on the right side of Figure~\ref{figure4}. Clearly, the construction of $D$ can be completed in polynomial time. 

\begin{figure}[htb]
    \centering 
    \makebox[0pt]{
        \begin{tikzpicture}[scale=1.3, >=latex, every node/.style={font=\small}]
    
            \filldraw[black] (0,0)   circle (1.5pt) node [anchor=north] {$x_{i}$};
            \filldraw[black] (1.8,0) circle (1.5pt) node [anchor=north] {$x_{j}$};
            \draw [thick] (0,0) -- (1.8,0);
     
            \filldraw[black] (4.2,0) circle (1.5pt) node [anchor=north, yshift=-2pt] {$x_{i}$};
            \filldraw[black] (6.0,0) circle (1.5pt) node [anchor=north, yshift=-2pt] {$x_{j}$};
      
            \filldraw[black] (5.1, -0.6) circle (1.5pt) node [anchor=north, yshift=-2pt] {$v_{i,j}^1$};
            \filldraw[black] (5.1,  0.8) circle (1.5pt) node [anchor=south, yshift=2pt] {$v_{i,j}^{\ell}$};

            \draw[thick, -latex] (4.2,0) to [bend left=25] (5.1,0.8);
            \draw[thick, -latex] (5.1,0.8) to [bend left=25] (4.2,0);
            \draw[thick, -latex] (6.0,0) to [bend right=25] (5.1,0.8);
            \draw[thick, -latex] (5.1,0.8) to [bend right=25] (6.0,0);

            \draw[thick, -latex] (4.2,0) to [bend right=20] (5.1,-0.6);
            \draw[thick, -latex] (5.1,-0.6) to [bend right=20] (4.2,0);
            \draw[thick, -latex] (6.0,0) to [bend left=20] (5.1,-0.6);
            \draw[thick, -latex] (5.1,-0.6) to [bend left=20] (6.0,0);
            
            \draw[densely dotted, line width=1.2pt]      (5.1, -0.4) -- (5.1, 0.6);

        \end{tikzpicture}
        }
    \caption{An edge $x_ix_j \in E(G)$ and its replacement structure in the construction of $D$}
    \label{figure4}
\end{figure}

Let $S=V(G)$. Now we prove that $G$ has a Hamiltonian cycle if and only if $D$ contains $\ell$ arc-disjoint $S$-cycles. First assume $G$ has a Hamiltonian cycle $C = x_1x_2\cdots x_nx_1$. Then it is easy to verify that the cycles
\[C_a=x_1v_{1,2}^ax_2\cdots x_{n-1}v_{n-1,n}^ax_nv_{n,1}^ax_1, a\in [\ell]\]
are $\ell$ arc-disjoint $S$-cycles in $D$. Conversely, if there exist $\ell$ arc-disjoint $S$-cycles in $D$, then choose any such $S$-cycle, denoted by $C'$. Since $S = V(G) \subseteq V(C')$, $C'$ must pass through every vertex from $V(G)$, with each such vertex appearing exactly once. Moreover, by the construction of $D$, for any pair of vertices $x_i, x_j\in V(G)$,  $\overrightarrow {x_ix_j}, \overrightarrow {x_jx_i} \not\in A(D)$.
Instead, one can only go from $x_i$ (resp.\ $x_j$) to $x_j$ (resp.\ $x_i$) via some vertex $v_{x_ix_j}^a$. Therefore, by replacing every 2-path $x_iv_{ij}^ax_j$ or $x_jv_{ij}^ax_i$ in $C'$ with the corresponding undirected edge $x_ix_j$ of $G$, we obtain a Hamiltonian cycle in $G$. 

Now by Theorem~\ref{Hamiltonian cycle problem}, when $\ell \geq 1 $ and $k$ is part of the input, the problem of deciding whether \(\lambda _S^c(D) \geq \ell\) is NP-complete. \qedhere

\end{proof}

For symmetric digraphs, although the problem is NP-complete when $\ell$ is fixed and $k$ is part of the input, it becomes polynomial-time solvable when $k$ is fixed and $\ell \in \{1,2\}$. We now give the proof.

\begin{theorem}\label{symmetric digraph2}
Let $k \geq 2$ be a fixed integer, $\ell \in\{ 1,2\}$. Let $D$ be a symmetric digraph and $S\subseteq V(D)$ with $|S|=k$. The problem of deciding whether \(\lambda _S^c(D) \geq \ell \) can be solved in polynomial time.
\end{theorem}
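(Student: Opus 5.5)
The plan is to reduce everything to the underlying undirected graph and to use the symmetry of $D$ to turn one cycle into two. Let $G$ be the underlying simple undirected graph of $D$. Since $D$ is symmetric and simple, directed cycles of length at least $3$ in $D$ correspond exactly to undirected cycles of $G$, each traversable in both directions. The directed $2$-cycles $xyx$ correspond to edges of $G$.

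\textbf{Step 1 (reversal trick).} Let $C$ be an $S$-cycle of length at least $3$, and let $\overleftarrow{C}$ be the cycle obtained by reversing every arc of $C$. Then $\overleftarrow{C}$ is an $S$-cycle of $D$ because $D$ is symmetric. Moreover, $C$ and $\overleftarrow{C}$ are arc-disjoint: a common arc $uv$ would force $vu\in A(C)$ as well, and a cycle of length at least $3$ cannot contain both $uv$ and $vu$. Hence $\lambda_S^c(D)\ge 2$ if and only if $D$ has an $S$-cycle of length at least $3$. The converse direction holds because $D$ is simple: two arc-disjoint $S$-cycles cannot both be the same $2$-cycle, and when $k\ge 3$ every $S$-cycle automatically has length at least $k\ge 3$. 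Consequently, for $k\ge 3$ the cases $\ell=1$ and $\ell=2$ coincide. In every case the question becomes whether $G$ contains an undirected cycle through all vertices of $S$. The single exception is $k=2$, $\ell=1$, where we additionally accept the case that the two vertices of $S$ are adjacent.

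\textbf{Step 2 ($k=2$).} Let $S=\{x,y\}$. An undirected cycle through $x$ and $y$ exists if and only if $G$ has two internally vertex-disjoint $(x,y)$-paths, not both equal to the edge $xy$. Splitting every vertex other than $x$ and $y$ into an in-copy and an out-copy joined by a unit-capacity arc turns this into a maximum-flow computation, which Menger's theorem justifies. We need flow value at least $2$ in $G$, with the edge $xy$ contributing at most one path since $G$ is simple. This runs in polynomial time. For $\ell=1$ we also accept the case $xy\in E(G)$.

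\textbf{Step 3 (fixed $k\ge 3$), the main obstacle.} Here we must decide whether $G$ has a cycle through $k$ prescribed vertices. We enumerate the at most $(k-1)!/2$ cyclic orders $x_{\sigma(1)},\dots,x_{\sigma(k)}$ of $S$, a constant number since $k$ is fixed. For each order we ask for $k$ paths, the $i$th joining $x_{\sigma(i)}$ to $x_{\sigma(i+1)}$, that are internally vertex-disjoint and avoid the other terminals. This can be phrased as an instance of the $k$-\textsc{Disjoint Paths} problem in an auxiliary graph. For each terminal we create two adjacent copies, or attach pendant copies, so that each path gets its own private endpoints. The $k$-\textsc{Disjoint Paths} problem is polynomial-time solvable for fixed $k$ in undirected graphs by the Robertson--Seymour graph minor algorithm. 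Alternatively, one can directly invoke the known polynomial (indeed linear) algorithm of Kawarabayashi for finding a cycle through $k$ prescribed vertices. The delicate point is getting the gadget right, so that disjoint paths in the auxiliary graph correspond exactly to a cycle in $G$ through $S$. Taken together with Step 1, this yields a polynomial algorithm for $\ell\in\{1,2\}$.
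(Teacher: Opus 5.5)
Your proof is correct. For $\ell=2$ it is the paper's argument. When $k\ge 3$, the reversal $C\mapsto\overleftarrow{C}$ gives $\lambda_S^c(D)\ge 2\iff\lambda_S^c(D)\ge 1$. For $k=2$, both you and the paper reduce to finding two internally disjoint $(u,v)$-paths in the underlying graph, using that the unique $2$-cycle on $S$ can be only one of the two cycles.

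The real difference is the case $\ell=1$. The paper does not argue it directly: it notes $\lambda_S^c(D)\ge 1\iff\kappa_S^c(D)\ge 1$ and invokes Theorem~3.6 of \cite{Sun-Jin1}. You instead observe that in a simple symmetric digraph, an $S$-cycle of length at least $3$ is exactly an orientation of an undirected cycle of $G$ through $S$, with the extra option $xy\in E(G)$ when $k=2$. You then decide whether such a cycle exists for fixed $k$ by trying all $(k-1)!/2$ cyclic orders of $S$ and solving a $k$-disjoint-paths instance with the Robertson--Seymour algorithm. Your route makes the theorem self-contained and gives one uniform criterion for every $k$ and both values of $\ell$: a cycle through $S$ in $G$, plus a flow test or adjacency check when $k=2$. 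The cost is invoking graph-minor machinery explicitly. The paper's version is shorter but rests entirely on the external result.

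The gadget you leave open needs a small correction. The \emph{pendant copies} variant fails, since two paths ending at pendants of $x_i$ would both have to pass through $x_i$.

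The two-copy variant works, set up as follows. Delete $x_i$ and add $x_i^-$ and $x_i^+$, each joined to every non-terminal neighbour of $x_i$ and to both copies of every terminal neighbour. Then ask for vertex-disjoint paths $P_i$ from $x_{\sigma(i)}^+$ to $x_{\sigma(i+1)}^-$. Every terminal copy is an endpoint of some path, so disjointness forces all internal vertices of every $P_i$ to be non-terminals. After re-identifying the copies, the $P_i$ become $x_{\sigma(i)}$--$x_{\sigma(i+1)}$ paths in $G$ that meet only at shared endpoints. Their concatenation is a cycle through $S$ because $k\ge 3$, and the converse is clear.

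Finally, the parenthetical \emph{indeed linear} about Kawarabayashi's algorithm is unnecessary. Polynomial time for fixed $k$ is all the statement needs, so you need not rely on that claim.
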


\begin{proof}
 Observe that $\kappa_S^c(D) \geq 1$ if and only if $\lambda_S^c(D) \geq 1$, by Theorem~3.6 in \cite{Sun-Jin1}, the problem of deciding whether $\kappa_S^c(D) \geq 1$ is solvable in polynomial time. Thus the problem of deciding whether $\lambda_S^c(D)\geq 1$ is polynomial-time solvable.

We now turn to the case $\ell=2$ and analyze the problem by considering two cases separately: $k \geq 3$ and $k = 2$.

We first consider the case that $k \geq 3$. Suppose that $C$ is an $S$-cycle in $D$. Replacing each arc of $C$ with its reverse arc results in another $S$-cycle which is arc-disjoint from $C$, and this directly implies $\lambda_S^c(D) \geq 2$. Hence, $D$ has two arc-disjoint $S$-cycles whenever it has one. Therefore $\lambda_S^c(D)\ge 2$ if and only if $\lambda_S^c(D)\ge 1$, so the problem is polynomial-time solvable by the preceding paragraph.

It remains to consider the case that $k=2$. Let $S=\{u,v\}$. If there exist two internally disjoint $(u,v)$-paths in the underlying undirected graph $G$ of $D$, then we can orient these paths appropriately to yield two arc-disjoint $S$-cycles in $D$. Conversely, suppose $D$ contains two arc-disjoint $S$-cycles. Since they are arc-disjoint, they cannot both be the $S$-cycle of length 2, consisting of the two arcs $\overrightarrow{uv}$ and $\overrightarrow{vu}$. Thus at least one of them has length more than 2. Choose such an $S$-cycle. Removing its orientation yields two internally disjoint $(u,v)$-paths in the underlying undirected graph $G$. 
Since finding two internally disjoint $(u,v)$-paths in the undirected graph $G$ can be solved in polynomial time, the problem of deciding whether \(\lambda _S^c(D) \geq 2\) can also be solved in polynomial time.   
\end{proof}

\begin{theorem}\label{PlaHamil}\cite{M. R. Garey}
    The undirected {\sc planar Hamiltonian cycle} problem is NP-complete.
\end{theorem}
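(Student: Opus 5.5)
Membership in NP is immediate: a Hamiltonian cycle is a polynomial-size certificate that can be checked in linear time. For NP-hardness I would not reduce from Theorem~\ref{Hamiltonian cycle problem} directly, since an arbitrary graph cannot simply be redrawn in the plane. Instead I would follow the classical Garey--Johnson--Tarjan strategy. First reduce {\sc 3-SAT} to {\sc Hamiltonian cycle} with a gadget construction whose only source of non-planarity is a family of ``exclusive-or'' connections. Then remove every crossing with a planar crossover gadget.

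\textbf{Step 1: non-planar reduction.} Given a 3-CNF formula, I build a single ``backbone'' cycle. It passes, in order, through one variable gadget per variable and one clause gadget per clause.
\begin{itemize}
\item Each variable gadget consists of two parallel routes, the ``true'' route and the ``false'' route, and a Hamiltonian cycle must traverse exactly one of them.
\item Each clause gadget is a small subgraph, for instance a triangle-like piece with three entry edges. It can be covered by a Hamiltonian cycle if and only if at least one of its three literal edges is \emph{not} taken.
\item Literal edges are linked to the appropriate variable routes by \emph{exclusive-or gadgets}. Such a gadget is a small subgraph attached to two edges $e,f$ with the property that every Hamiltonian cycle of the whole graph uses exactly one of $e,f$. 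The standard ladder-type gadget, used in Karp-style reductions, has this property.
\end{itemize}
The intended correspondence is that satisfying assignments correspond to Hamiltonian cycles.

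\textbf{Step 2: drawing and crossings.} I draw the backbone as a circle, with the variable gadgets on one arc and the clause gadgets on the other, and route each exclusive-or connection as a curve inside the disc. All variable and clause gadgets are planar, so crossings occur only between pairs of exclusive-or curves. The number of crossings is polynomial.

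\textbf{Step 3: the crossover gadget (main obstacle).} I replace each crossing of two exclusive-or connections, one on edges $(a,a')$ and one on edges $(b,b')$, by a planar subgraph $X$. The gadget $X$ must simulate two \emph{independent} exclusive-or relations: $a$ XOR $a'$ and $b$ XOR $b'$. Moreover, any Hamiltonian cycle restricted to $X$ must be one of the intended traversal patterns, and every combination of the two choices must be realizable. My first attempt would be the known construction: place four exclusive-or gadgets around a central vertex arrangement in a planar way, which forces consistency via the parity identities $a\oplus b\oplus$(middle) $=0$. I would then verify by a finite case analysis of how a Hamiltonian cycle can enter and leave $X$ through its four boundary edges. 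This local verification is the delicate part, because a Hamiltonian path cover of $X$ could in principle use ``shortcut'' patterns, and each of them must be ruled out.

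\textbf{Step 4: conclusion.} After every crossing is replaced, the graph is planar and has polynomial size. By the local properties established in Steps 1 and 3, it is Hamiltonian if and only if the formula is satisfiable. This establishes NP-completeness of the undirected planar Hamiltonian cycle problem.
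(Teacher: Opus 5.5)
The paper does not prove this statement; it only cites Garey, Johnson and Tarjan. Your outline follows their route: \textsc{3-SAT}, exclusive-or connections, a drawing in which only exclusive-or connections cross, then a planar crossover. The route is fine, but the proposal has a genuine gap. Steps 1 and 2 are the standard non-planar reduction and a routine drawing, so everything specific to planarity sits in Step 3. There you only announce ``the known construction'' and a case analysis you have not carried out. As written, the proposal proves nothing beyond NP-completeness of Hamiltonian cycle for general graphs. The claim in Step 4, that the planarized graph is Hamiltonian if and only if the formula is satisfiable, is therefore unsupported.

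Your hint for Step 3 also does not complete itself. Write $x=1$ when edge $x$ lies on the cycle.
\begin{itemize}
\item With the ladder gadget, a connection is a bundle of rungs joining the two replaced edges, so it cannot simply be cut at a crossing point.
\item Joining $a,b,a',b'$ by four short exclusive-or gadgets to edges $p,q,r,s$ of a central piece $Y$ is the right way to cut it. But then $a\neq p$ and $a'\neq r$, so $Y$ must force $p\neq r$ and $q\neq s$ independently. It must also let each of $p,q,r,s$ be used or unused and realize all four combinations. In other words, $Y$ is itself a planar crossover, and the argument is circular until $Y$ is exhibited.
\item A single central vertex cannot serve as $Y$. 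The only constraint a vertex imposes is that exactly two of its incident edges are used, and at a degree-$4$ vertex this permits $p=r=1$, $q=s=0$.
\item Parity identities such as $a\oplus b\oplus m=0$ are not available as gadgets for free either. At a cubic vertex with incident edges $x,y,z$, the admissible patterns $(x,y,z)$ are $(1,1,0),(1,0,1),(0,1,1)$: even parity with $(0,0,0)$ excluded. So the circuit-style crossover $m=a\oplus b$, $a'=m\oplus b$, $b'=m\oplus a$ cannot be built from vertices alone.
\end{itemize}

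What is missing is an explicit planar graph $X$ together with an exhaustive check of two things. First, in every Hamiltonian cycle of the whole graph, $X$ is traversed only in the intended patterns: all internal vertices covered, no subtour, and no path entering along one connection and leaving along the other. Second, each of the four combinations of the two choices extends to such a traversal. This gadget and its case analysis are the substance of the planarity part of the Garey--Johnson--Tarjan argument.
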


Note that the construction in the proof of Theorem~\ref{symmetric digraph}  preserves planarity. Therefore, if $G$ is an undirected planar graph, then a directed planar graph $D$ can be constructed in the same manner. Similarly, we can prove that $G$ has a Hamiltonian cycle if and only if there exist $\ell$ arc-disjoint $S$-cycles where $S=V(G)$ in $D$. By Theorem~\ref{PlaHamil}, we immediately obtain Theorem~\ref{planar digraph2} (the proof is analogous to that of Theorem~\ref{symmetric digraph}, we omit the details here).

\begin{theorem}\label{planar digraph2}
Let $\ell\geq 1$ be a fixed integer and let $k$ be part of the input. Then, for a planar digraph $D$ and a set $S\subseteq V(D)$ with $|S|=k$, the decision problem of whether $\lambda_S^c(D)\geq \ell$ is NP-complete.

\end{theorem}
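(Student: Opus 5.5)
Membership in NP is immediate: a certificate consists of $\ell$ cycles, and we can check in polynomial time that each one is a directed cycle containing $S$ and that the cycles are pairwise arc-disjoint. For NP-hardness we reduce from the undirected {\sc planar Hamiltonian cycle} problem, which is NP-complete by Theorem~\ref{PlaHamil}. We reuse exactly the construction in the proof of Theorem~\ref{symmetric digraph}.

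Let $G$ be a planar graph with $V(G)=\{x_1,\dots,x_n\}$. For each edge $x_ix_j\in E(G)$, add $\ell$ new vertices $v_{i,j}^a$ for $a\in[\ell]$. Join each of them to $x_i$ and $x_j$ by arcs in both directions. Call the result $D$, and set $S=V(G)$, so $k=n$ is part of the input. Since $\ell$ is fixed, $D$ has size $O(\ell(|V(G)|+|E(G)|))$ and is built in polynomial time.

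The first point to check is that $D$ is planar; this is the only new ingredient beyond Theorem~\ref{symmetric digraph}. Planarity is defined through the underlying undirected graph. In that graph the two opposite arcs between a pair of vertices become a single edge, since the paper's convention works with simple underlying graphs (or, if multiplicities are kept, they are parallel edges, which preserve planarity). Each $x_ix_j$ is thus replaced by $\ell$ internally disjoint paths $x_iv_{i,j}^ax_j$ of length two. Fix a plane embedding of $G$. Each edge $x_ix_j$ can be drawn as $\ell$ nested curves inside a thin neighbourhood of its original drawing, with $v_{i,j}^a$ placed on the $a$-th curve. These neighbourhoods can be chosen pairwise disjoint away from the endpoints, so no crossings arise and the underlying graph of $D$ is planar.

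The equivalence argument is then copied verbatim from Theorem~\ref{symmetric digraph}. If $G$ has a Hamiltonian cycle $x_1x_2\cdots x_nx_1$ (after relabelling), then the cycles $C_a=x_1v_{1,2}^ax_2\cdots x_nv_{n,1}^ax_1$ for $a\in[\ell]$ are $\ell$ arc-disjoint $S$-cycles. Conversely, let $C'$ be any $S$-cycle in $D$. No two vertices of $V(G)$ are adjacent in $D$, and each $v_{i,j}^a$ has only the neighbours $x_i$ and $x_j$. Hence $C'$ alternates between original vertices and subdivision vertices, and it visits each original vertex exactly once. Contracting each segment $x_iv_{i,j}^ax_j$ to the edge $x_ix_j$ therefore yields a Hamiltonian cycle of $G$. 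One case needs care: $n\ge 3$ rules out a 2-cycle $x_iv_{i,j}^ax_i$, since such a cycle cannot contain all of $S$, so the contracted cycle is a genuine cycle of $G$.

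Combining this with Theorem~\ref{PlaHamil} gives the NP-completeness. I expect the only real obstacle to be stating the planarity preservation precisely, in particular handling the digon (2-cycle) arcs under the paper's definition of a planar digraph. The rest is routine.
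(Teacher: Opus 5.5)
Your proposal is correct and follows the paper's own route: reduce from planar Hamiltonian cycle (Theorem~\ref{PlaHamil}) using the construction of Theorem~\ref{symmetric digraph} with $S=V(G)$, observe that it preserves planarity, and rerun the same equivalence. Your planarity argument is more detailed than the paper's, which only asserts it. One small nit: the degenerate case that $n\ge 3$ excludes is not a 2-cycle but the 4-cycle $x_iv_{i,j}^ax_jv_{i,j}^bx_i$, which is an $S$-cycle when $n=2$ and would contract to the edge $x_ix_j$ traversed twice.
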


\begin{theorem}~\cite{C. Picouleau}\label{$d$-regular graph}
    For any fixed $d \geq 3$, the problem of deciding whether a $d$-regular graph has a Hamiltonian cycle is NP-complete. 
\end{theorem}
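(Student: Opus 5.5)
The claim is in NP, since a Hamiltonian cycle is a polynomial-size certificate. For hardness I would argue in two stages: first settle the cubic case $d=3$, then reduce the cubic case to every fixed $d\ge 4$ with a local vertex-replacement gadget.

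\emph{Cubic case.} For $d=3$ I would use the Garey--Johnson--Tarjan reduction, which shows that Hamiltonicity stays NP-complete even for planar cubic graphs. If I had to reprove it, I would start from Theorem~\ref{Hamiltonian cycle problem}. Each vertex $v$ of degree $r$ becomes a ring of degree-reducing gadgets with $r$ ports, built so that every Hamiltonian cycle of the new graph enters and leaves the ring exactly once. This lets Hamiltonian cycles of the two graphs correspond.

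\emph{Reduction from $3$ to $d$.} Let $G$ be cubic. I would replace each vertex $v$ by a gadget $H_v$ with three port vertices $p_v^1,p_v^2,p_v^3$. Each port has degree $d-1$ inside $H_v$, and every other vertex of $H_v$ has degree $d$. The edge $uv$ of $G$ (say the $i$-th edge at $v$ and the $j$-th at $u$) becomes the edge $p_v^i p_u^j$. The resulting graph $G'$ is $d$-regular. The gadget must satisfy two properties:
\begin{itemize}
\item[(a)] for every pair of ports there is a Hamiltonian path of $H_v$ between them;
\item[(b)] every Hamiltonian cycle of $G'$ meets $H_v$ in a single path, entering through one port and leaving through another.
\end{itemize}
Property (b) follows if each $H_v$ is joined to the rest of $G'$ by exactly three edges. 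A Hamiltonian cycle crosses the cut $\partial H_v$ an even, positive number of times, hence exactly twice. So it uses exactly two of the three edges at $v$. Contracting each $H_v$ then gives a Hamiltonian cycle of $G$. Conversely, property (a) lets a Hamiltonian cycle of $G$ be expanded inside each $H_v$.

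\emph{Building the gadget.} For odd $d$, take $K_{d+2}$ and delete the edges of a suitable $2$-factor on its $d+2$ vertices to drop degrees to $d-1$. Then add back a near-perfect structure so that all but three chosen vertices return to degree $d$. Alternatively, take $K_{d+1}$ plus a few extra vertices and adjust degrees by removing a matching. Dense, highly connected graphs like these have Hamiltonian paths between any two prescribed vertices by Dirac/Ore-type arguments, which gives (a).

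\emph{Main obstacle: parity.} The degree sum of $H_v$ is $|V(H_v)|\,d-3$, which must be even. So $d$ and $|V(H_v)|$ must both be odd, and a three-port gadget cannot exist for even $d$. For even $d$ I would change the design. One option is to let each port carry two external edges, pairing up two copies of $G$ so the cut sizes stay even. Another is to first reduce to the case where $G$ has a perfect matching, doubling the edges of that matching through gadgets. Getting a correct construction for even $d$, together with the parity argument that replaces the three-edge cut in (b), is the step I expect to need the most care.
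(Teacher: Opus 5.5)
The paper gives no proof of this statement. It is quoted from Picouleau and used only as a black box in the reduction behind Theorem~\ref{Eulerian digraph2}, so your argument has to stand on its own. For odd $d$ it does, once the gadget is pinned down: for instance, $K_{d+2}$ minus a triangle on the three ports and minus a perfect matching on the other $d-1$ vertices, which is Hamiltonian-connected by Ore's condition for $d\ge 5$.

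For even $d$ there is a genuine gap, and your own parity remark shows why. In a $d$-regular graph with $d$ even every edge cut has even size, so no gadget can be attached by exactly three edges. The step ``the cycle crosses $\partial H_v$ an even positive number of times, hence exactly twice'' is what gives property (b), and it is therefore unavailable. Your two fallback options are not constructions. Once a gadget is attached by four or six edges, a Hamiltonian cycle may enter it two or three times. Nothing in your sketch rules this out or explains how to read off a Hamiltonian cycle of $G$ in that case. As written, the statement is proved only for odd $d$.

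The missing ingredient is a counting argument that forces a single visit regardless of the cut size. Make the gadget bipartite with sides $A,B$, $|A|=|B|+1$, and all ports in $A$. For example, take $H=K_{d+1,d}$ minus a matching of size $d$, and let the $d$ matched vertices of $A$ be the ports, each of degree $d-1$ in $H$. Any Hamiltonian cycle of $G'$ meets $H$ in vertex-disjoint paths whose ends are ports. Each such path alternates between $A$ and $B$ with both ends in $A$, so it has one more vertex in $A$ than in $B$. Since $|A|-|B|=1$, there is exactly one such path.

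This makes your second option precise, uniformly for all $d\ge 3$:
\begin{itemize}
\item Compute a perfect matching $M$ of the cubic graph $G$. If none exists, $G$ is not Hamiltonian, and you output a fixed no-instance such as two disjoint copies of $K_{d+1}$.
\item Realize each $uv\in M$ by $d-2$ pairwise disjoint edges between ports of $H_u$ and ports of $H_v$.
\item Realize every other edge of $G$ by a single port-to-port edge.
\end{itemize}
Then $G'$ is $d$-regular. A Hamiltonian cycle cannot use two edges of the same bundle at $H_v$, since it would then close up inside $H_u\cup H_v$. Contraction, together with Hamiltonian paths in $H$ between any two ports (easy to exhibit), gives the equivalence exactly as in your odd case.
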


Given a connected $d$-regular undirected graph $G$, we apply the same replacement construction as in the proof of Theorem~\ref{symmetric digraph} (see Figure~\ref{figure4}): each edge of $G$ is replaced by $\ell$ internally subdivided paths, and then each undirected edge is replaced by two opposite arcs. Let $D$ be the resulting digraph and let $S=V(G)$. In $D$, every vertex originally from $G$ has in-degree and out-degree $\ell d$, whereas each subdivision vertex has in-degree and out-degree $2$. Since $G$ is connected, $D$ is Eulerian.
The same argument as in Theorem~\ref{symmetric digraph} shows that $G$ has a Hamiltonian cycle if and only if $D$ has $\ell$ arc-disjoint $S$-cycles. Therefore, by Theorem~\ref{$d$-regular graph}, for fixed $\ell\ge 1$ and $k$ part of the input, deciding whether $\lambda_S^c(D)\geq \ell$ on Eulerian digraphs is NP-complete. This completes the proof of Theorem~\ref{Eulerian digraph2}.

\begin{theorem}\label{Eulerian digraph2}
Let $\ell\geq 1$ be a fixed integer and let $k$ be part of the input. Then, for an Eulerian digraph $D$ and a set $S\subseteq V(D)$ with $|S|=k$, the decision problem of whether $\lambda_S^c(D)\geq \ell$ is NP-complete.

\end{theorem}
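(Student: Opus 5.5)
The plan is a polynomial reduction from {\sc Hamiltonian cycle} on $d$-regular graphs, which is NP-complete for every fixed $d\ge 3$ by Theorem~\ref{$d$-regular graph}. Membership in NP is immediate: a certificate consists of $\ell$ directed cycles. We verify that each one contains $S$ and that they are pairwise arc-disjoint, which takes polynomial time because $\ell$ is fixed.

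\textbf{Construction.} Fix $d=3$. Since a disconnected graph is trivially non-Hamiltonian, we may assume the input graph $G$ is connected and $3$-regular, with $V(G)=\{x_1,\dots,x_n\}$. We build $D$ exactly as in the proof of Theorem~\ref{symmetric digraph}. Each edge $x_ix_j$ is replaced by $\ell$ internally disjoint paths $x_iv^a_{i,j}x_j$ for $a\in[\ell]$, and each resulting undirected edge is replaced by the pair of opposite arcs. We take $S=V(G)$, so that $k=n$ is part of the input.

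\textbf{Eulerian property.} This needs a short check. Every original vertex has in-degree and out-degree $\ell d$, and every subdivision vertex $v^a_{i,j}$ has in-degree and out-degree $2$. Because $G$ is connected, the underlying graph of $D$ is connected as well. Hence $D$ is Eulerian, and it is simple since all parallel edges were subdivided.

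\textbf{Equivalence.} This repeats the argument of Theorem~\ref{symmetric digraph}. If $G$ has a Hamiltonian cycle $x_{\sigma(1)}\cdots x_{\sigma(n)}x_{\sigma(1)}$, then for each $a\in[\ell]$ we route the $a$-th cycle through the vertices $v^a_{\sigma(i),\sigma(i+1)}$. These $\ell$ cycles are pairwise arc-disjoint $S$-cycles. Conversely, suppose $D$ has $\ell\ge1$ arc-disjoint $S$-cycles, and take one of them, $C'$. No two vertices of $S$ are adjacent in $D$, so consecutive vertices of $S$ on $C'$ are joined through a single subdivision vertex $v^a_{i,j}$. Such a vertex has only the neighbours $x_i$ and $x_j$, and a cycle cannot return immediately to the vertex it came from, since $x_iv^a_{i,j}x_i$ would close a $2$-cycle. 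This forces $k\ge 3$, which holds because $n\ge4$. Contracting each 2-path $x_iv^a_{i,j}x_j$ on $C'$ to the edge $x_ix_j$ therefore yields a Hamiltonian cycle of $G$.

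\textbf{Main obstacle.} The only point needing care is the converse. We must make sure that $C'$ cannot revisit an $x_i$. This follows because $C'$ is a cycle, so each vertex occurs once. We must also make sure that distinct 2-paths on $C'$ correspond to distinct edges of $G$. If two of them used copies of the same edge, $C'$ would traverse $x_i\to x_j$ and then $x_j\to x_i$, giving a cycle of length $4$ on $S=\{x_i,x_j\}$. This contradicts $|S|=n\ge 4$. With these checks, the reduction is polynomial and the theorem follows from Theorem~\ref{$d$-regular graph}.
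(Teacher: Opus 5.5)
Your proposal is correct and follows the paper's proof. You use the same $\ell$-fold subdivided symmetric construction from Theorem~\ref{symmetric digraph}, applied to a connected $d$-regular graph with $S=V(G)$, the same degree count ($\ell d$ for original vertices and $2$ for subdivision vertices) to get the Eulerian property, and the same reduction from Hamiltonicity of $d$-regular graphs. Your explicit treatment of disconnected inputs, and your check that the 2-paths on $C'$ correspond to distinct edges of $G$, simply spell out details that the paper leaves implicit.
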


Theorems~\ref{Eulerian digraph}, \ref{planar digraph} and \ref{symmetric digraph} immediately imply the following corollary, where $k$ is also part of the input.

\begin{corollary}
Let $k$ and $\ell$ be part of the input. Deciding whether $\lambda_S^c(D) \geq \ell$ where $S \subseteq V(D)$ with $|S| = k$ is NP-complete for Eulerian digraphs, planar digraphs and symmetric digraphs.
\end{corollary}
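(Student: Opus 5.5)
The corollary is a direct consequence of Theorems~\ref{Eulerian digraph}, \ref{planar digraph} and \ref{symmetric digraph}. I would first show membership in NP and then derive hardness from each fixed-parameter result by viewing it as a restriction of the general problem.

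\textbf{Membership in NP.} Take as certificate a list of $\ell$ closed walks in $D$. In polynomial time we can check that each is a directed cycle containing every vertex of $S$, and that the $\ell$ cycles are pairwise arc-disjoint. The certificate has polynomial size because we may assume $\ell\le |A(D)|$. Indeed, if $\ell>|A(D)|$ the answer is trivially ``no'', since every $S$-cycle uses at least one arc, so $\lambda_S^c(D)\le |A(D)|$.

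\textbf{Hardness.} The problem in which both $k$ and $\ell$ are part of the input contains every fixed-parameter version as a subproblem. So any NP-hard fixed-parameter version reduces to it by the identity map, which simply writes the fixed parameter into the input. Concretely:
\begin{itemize}
\item For Eulerian digraphs, fix $k=3$ and apply Theorem~\ref{Eulerian digraph}, where $\ell$ is already part of the input.
\item For planar digraphs, fix $k=2$ and apply Theorem~\ref{planar digraph}.
\item For symmetric digraphs, fix $\ell=1$ and apply Theorem~\ref{symmetric digraph}, where $k$ is part of the input. Theorems~\ref{planar digraph2} and~\ref{Eulerian digraph2} would serve equally well for the other two classes.
\end{itemize}
In each case an instance of the restricted problem is literally an instance of the general problem with the same answer. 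Hence the general problem is NP-hard on each of the three classes.

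The only step needing any care is the size bound in the NP argument. The parameter $\ell$ is given in binary and could be exponentially large, so we must cap it by $|A(D)|$ before listing $\ell$ cycles as a certificate. Beyond that the corollary is immediate.
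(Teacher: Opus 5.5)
Your proposal is correct and takes the same route as the paper, which simply notes that Theorems~\ref{Eulerian digraph}, \ref{planar digraph} and~\ref{symmetric digraph} immediately imply the corollary, since each fixed-parameter version is a special case of the problem with both $k$ and $\ell$ in the input. Your explicit NP-membership argument fills in a detail the paper leaves implicit under ``clearly in NP'': it bounds $\ell$ by $|A(D)|$ so that the certificate stays polynomial even when $\ell$ is encoded in binary.
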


\section{Exact values for $\lambda_{k}^{c}(D)$ on several classes of digraphs}

The main result of this section is a precise characterization of the directed cycle $k$-arc-connectivity for three special classes of digraphs: complete digraph $\overleftrightarrow{K}_{n}$, complete regular multipartite digraph $\overleftrightarrow{K}_{[w]^l}$, and complete bipartite digraph $\overleftrightarrow{K}_{t,z}$.


By the definition of $\lambda_{k}^{c}(D)$, we directly obtain the following two facts.

\begin{Fact}\label{Fact1}
Suppose $D$ is a digraph of order $n$. For all integers $2\leq k\leq n$, we have $\lambda_{k}^{c}(D)\leq \delta^{0}(D)$.

\end{Fact}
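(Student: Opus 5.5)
The plan is to bound $\lambda_S^c(D)$ from above for a single, well-chosen $S$ of size $k$. Since $\lambda_k^c(D)$ is a minimum over all such $S$, we have $\lambda_k^c(D)\le \lambda_S^c(D)$ for every $S$ with $|S|=k$. So it is enough to find one $S$ of size $k$ with $\lambda_S^c(D)\le \delta^0(D)$.

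First, let $v$ be a vertex attaining the minimum semi-degree, so that $\min\{\deg^+_D(v),\deg^-_D(v)\}=\delta^0(D)$. Because $k\ge 2$ and $n\ge k$, I can pick any $S\subseteq V(D)$ with $|S|=k$ and $v\in S$, adding $k-1$ further vertices arbitrarily.

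Next, consider any family of pairwise arc-disjoint $S$-cycles. Each cycle in the family contains $v$, since $v\in S$. As a directed cycle through $v$, each one uses exactly one arc leaving $v$ and exactly one arc entering $v$. Because the cycles are arc-disjoint, these out-arcs are pairwise distinct, and so are the in-arcs. Hence the family has at most $\deg^+_D(v)$ members and at most $\deg^-_D(v)$ members. This gives $\lambda_S^c(D)\le \delta^0(D)$, and therefore $\lambda_k^c(D)\le \lambda_S^c(D)\le\delta^0(D)$.

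No step here is a real obstacle; the argument is a direct counting of the arcs at $v$. The one point to check is that the chosen $v$ can indeed be placed in some $S$ of size exactly $k$, and this holds by the range $2\le k\le n$.
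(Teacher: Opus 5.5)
Your proof is correct and matches the paper, which gives no written proof and simply says Fact~1 follows directly from the definition. Your argument (put a vertex $v$ attaining $\delta^{0}(D)$ into $S$, and note that pairwise arc-disjoint $S$-cycles use distinct out-arcs and distinct in-arcs at $v$) is exactly the counting that remark relies on.
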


\begin{Fact}\label{Fact2}
Suppose $D$ is a digraph of order $n$. Then $\lambda_{k+1}^c(D)\le\lambda_k^c(D)$ for all integers $2\leq k\leq n$.


\end{Fact}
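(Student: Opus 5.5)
The plan is a monotonicity argument: an $S$-cycle condition only gets more restrictive as $S$ grows, so enlarging a minimizing terminal set cannot increase the packing number. The inequality $\lambda_{k+1}^c(D)\le\lambda_k^c(D)$ only makes sense when $\lambda_{k+1}^c(D)$ is defined, so I read the statement for $2\le k\le n-1$. The case $k=n$ is vacuous.

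First I fix a set $S\subseteq V(D)$ with $|S|=k$ attaining the minimum in the definition, so that $\lambda_S^c(D)=\lambda_k^c(D)$. Since $k\le n-1$, there is a vertex $v\in V(D)\setminus S$. Put $S'=S\cup\{v\}$, so $|S'|=k+1$.

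The key observation is that every $S'$-cycle $C$ satisfies $S\subseteq S'\subseteq V(C)$, so $C$ is also an $S$-cycle. Hence any family of pairwise arc-disjoint $S'$-cycles is also a family of pairwise arc-disjoint $S$-cycles. This gives
\[
\lambda_{S'}^c(D)\le \lambda_S^c(D)=\lambda_k^c(D).
\]
Since $\lambda_{k+1}^c(D)$ is the minimum of $\lambda_{T}^c(D)$ over all $T$ with $|T|=k+1$, and $S'$ is one such set, we get
\[
\lambda_{k+1}^c(D)\le\lambda_{S'}^c(D)\le\lambda_k^c(D).
\]

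There is no real obstacle. The only point needing care is the index range: we need $k<n$ so that a vertex outside $S$ exists and $\lambda_{k+1}^c(D)$ is defined. The convention that $\lambda_S^c(D)=0$ when no $S$-cycle exists causes no issue, because the inclusion of families of cycles still holds in that case.
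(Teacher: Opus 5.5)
Your proof is correct and matches the paper's approach: the paper gives no written proof and only says the fact follows directly from the definition of $\lambda_k^c(D)$, and your argument (extend a minimizing $k$-set $S$ by one vertex and note that every $S'$-cycle is an $S$-cycle) is exactly that unpacking. You are also right that the inequality only makes sense for $2\le k\le n-1$, since $\lambda_{n+1}^c(D)$ is undefined, so the stated range $k\le n$ should be read that way.
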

By Fact~\ref{Fact2}, the parameter $\lambda_k^c(D)$ is a non-increasing function of $k$. To determine $\lambda_k^c(\overleftrightarrow{K}_n)$, we further need the following famous theorem due to Tillson:
\begin{theorem}[Tillson's Decomposition Theorem]\cite{{Tillson T W}} \label{Tillson's}
The complete digraphs $\overleftrightarrow{K}_{n}$ can be decomposed into Hamiltonian cycles if and only if $n\ne 4,6$.
\end{theorem}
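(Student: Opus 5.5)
The statement splits into three parts: $n$ odd, $n=4,6$ (non-existence), and $n$ even with $n\neq 4,6$. I would treat them in that order, since they get progressively harder. Throughout, $\overleftrightarrow{K}_{n}$ has $n(n-1)$ arcs, so a decomposition into Hamiltonian cycles consists of exactly $n-1$ cycles of length $n$. Equivalently, every vertex is left by exactly one arc of each cycle.

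\emph{Odd $n$.} Here I would use Walecki's classical construction. Place vertex $\infty$ together with $\mathbb{Z}_{n-1}$, where $n-1=2m$. Take the zig-zag Hamiltonian path $0,1,2m-1,2,2m-2,\dots$ on $\mathbb{Z}_{2m}$ and close it through $\infty$. Its $m$ rotations by $0,1,\dots,m-1$ decompose the undirected $K_n$ into $m$ Hamiltonian cycles. Orienting each of these cycles in both directions gives $2m=n-1$ arc-disjoint directed Hamiltonian cycles, and these cover all arcs of $\overleftrightarrow{K}_n$. The case $n=2$ is immediate, since the 2-cycle itself is the decomposition.

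\emph{Non-existence for $n=4,6$.} For $n=4$ we would need $3$ directed $4$-cycles covering the $12$ arcs. Fix the cycle $C_1=1234$. Its reverse $4321$ cannot be used, because every remaining cycle must use both diagonal arcs $\overrightarrow{13}$ and $\overrightarrow{31}$ or both of $\overrightarrow{24}$ and $\overrightarrow{42}$. A short case check on where $\overrightarrow{13}$ and $\overrightarrow{31}$ go then shows that the two remaining cycles cannot both be Hamiltonian.

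For $n=6$ the same strategy applies, but the search is much larger. I would fix the first cycle as $123456$ up to isomorphism. I would then branch on the successor of vertex $1$ in the other four cycles, pruning with the symmetries of the fixed cycle (rotation and reflection). This is a finite exhaustive analysis, best organized as, or verified by, a computer search. This part is routine but tedious.

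\emph{Even $n\ge 8$.} This is the main obstacle, and it is where Tillson's actual work lies: orienting an undirected decomposition cannot help, because the undirected $K_n$ has odd degree $n-1$. My plan is a difference (cyclic) construction. Write $n=2m$ and take the vertex set $\mathbb{Z}_{2m-2}\cup\{\infty_1,\infty_2\}$. I would look for two base Hamiltonian cycles through $\infty_1$ and $\infty_2$ satisfying three conditions:
\begin{itemize}
\item their arcs between finite vertices realize every nonzero difference in $\mathbb{Z}_{2m-2}$ exactly once in total;
\item each $\infty_i$ is entered from and left to finite vertices in a controlled way;
\item developing them under translation by $\mathbb{Z}_{2m-2}$, with one extra adjustment cycle handling the arcs $\overrightarrow{\infty_1\infty_2}$ and $\overrightarrow{\infty_2\infty_1}$ and the leftover differences, yields exactly $2m-1$ cycles.
\end{itemize}
Concretely, I would try to build the base cycles from a zig-zag sequence similar to Walecki's, producing differences $\pm1,\pm2,\dots$. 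The remaining work splits into two steps. First, verify the difference count. Second, check that each translate is still Hamiltonian, meaning that no translate closes prematurely into a shorter cycle. I expect the Hamiltonicity check to be the delicate step, and likely to need separate base cycles according to $m \bmod 2$, or according to $n \equiv 0$ versus $2 \pmod 4$. Small cases such as $n=8$ and $n=10$ may need to be handled by explicit decompositions if the general pattern only starts later.
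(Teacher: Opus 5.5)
The paper gives no proof of this statement. It is quoted from Tillson, so your proposal has to stand on its own.

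\textbf{What works.} Your odd case is correct: Walecki's decomposition of $K_n$, with each cycle used in both orientations. So is $n=2$. For $n=4$ your conclusion is right, but the stated reason is not: no directed $4$-cycle contains both $\overrightarrow{13}$ and $\overrightarrow{31}$. The clean argument is that each of the other four directed Hamiltonian cycles shares an arc with $1234$. So only $4321$ can accompany it, and removing $1234$ and $4321$ leaves two digons. Deferring $n=6$ to an exhaustive search is legitimate in principle, but nothing is actually verified there.

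\textbf{The gap.} The real problem is the case $n=2m\ge 8$, which is the entire content of Tillson's theorem. There your framework is not just delicate; it is infeasible.

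First, every translation of $\mathbb{Z}_{2m-2}$ fixes $\infty_1$ and $\infty_2$, so it also fixes the arcs $\overrightarrow{\infty_1\infty_2}$ and $\overrightarrow{\infty_2\infty_1}$. Neither arc can therefore lie in a developed cycle, since all its translates would share that arc. But a single directed cycle containing both arcs is the digon $\infty_1\infty_2\infty_1$. So your one adjustment cycle can never be Hamiltonian.

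Second, the counting fails. A translation that preserves a directed Hamiltonian cycle acts on it as a rotation fixing $\infty_1$, hence trivially. So each base cycle has $2m-2$ distinct translates, and two base cycles already give $4m-4>2m-1$ cycles. The difference bookkeeping fails too: two Hamiltonian base cycles contain at least $2(2m-4)>2m-3$ finite--finite arcs. They therefore cannot realize each nonzero difference of $\mathbb{Z}_{2m-2}$ exactly once.

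In fact, the digon argument shows that no Hamiltonian decomposition of $\overleftrightarrow{K}_{2m}$ contains even one full $\mathbb{Z}_{2m-2}$-orbit. The single leftover cycle would have to contain both arcs between $\infty_1$ and $\infty_2$.

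The natural $1$-rotational alternative on $\mathbb{Z}_{2m-1}\cup\{\infty\}$ is blocked as well. Take an ordering $a_0,\dots,a_{2m-2}$ of $\mathbb{Z}_{2m-1}$ whose $2m-2$ consecutive differences are distinct. Those differences are then all the nonzero elements, which sum to $0$. This forces $a_{2m-2}=a_0$, a contradiction.

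So a genuinely different construction is needed for even $n\ge 8$. As written, your proposal settles only odd $n$, $n=2$ and $n=4$, plus $n=6$ modulo the unperformed search.
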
 

We now derive the exact value of $\lambda_{k}^c(\overleftrightarrow{K}_{n})$ and present its proof.

\begin{theorem}\label{1}
For $\overleftrightarrow{K}_n$, the values of $\lambda_k^c$ are as follows:
\begin{description}
\item[(i) ]When $n\ne 4,6$, we have $\lambda _{k}^{c} (\overleftrightarrow{K}_{n})=n-1$.
\item[(ii) ]When $n=4,6$, we have $\lambda _{k}^{c}(\overleftrightarrow{K}_{n})=\left\{\begin{matrix}
 n-1,  & 2\leq k\leq n-2, \\
  n-2, & n-1 \leq k\le n.
\end{matrix}\right. $
\end{description}
\end{theorem}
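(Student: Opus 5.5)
The plan is to get the upper bound from Fact~\ref{Fact1} and the lower bound from Tillson's Decomposition Theorem (Theorem~\ref{Tillson's}), with monotonicity (Fact~\ref{Fact2}) letting us treat only the extreme values of $k$. Since $\delta^0(\overleftrightarrow{K}_n)=n-1$, Fact~\ref{Fact1} gives $\lambda_k^c(\overleftrightarrow{K}_n)\le n-1$ for every $k$.

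\textbf{Part (i).} For $n\neq 4,6$, Theorem~\ref{Tillson's} decomposes $\overleftrightarrow{K}_n$ into $n-1$ arc-disjoint Hamiltonian cycles. The count is $n-1$ because there are $n(n-1)$ arcs and each Hamiltonian cycle has $n$ of them. Each Hamiltonian cycle contains every $S$, so $\lambda_S^c\ge n-1$ for all $S$, and hence $\lambda_k^c(\overleftrightarrow{K}_n)= n-1$ for all $2\le k\le n$.

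\textbf{Part (ii), lower bounds.} By Fact~\ref{Fact2} it suffices to prove three things:
\begin{itemize}
\item[(a)] $\lambda_{n-2}^c\ge n-1$;
\item[(b)] $\lambda_n^c\ge n-2$;
\item[(c)] $\lambda_{n-1}^c\le n-2$.
\end{itemize}
Note that (c) together with Fact~\ref{Fact2} gives $\lambda_n^c\le n-2$, so (a)--(c) determine every value.

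For (b), take an explicit packing of $n-2$ arc-disjoint Hamiltonian cycles in $\overleftrightarrow{K}_4$ (two cycles) and in $\overleftrightarrow{K}_6$ (four cycles). In $\overleftrightarrow{K}_4$ one can use $1234$ and $1432$, which are reverses of each other. For $\overleftrightarrow{K}_6$ one writes down four cycles explicitly.

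For (a), by symmetry fix $S=V\setminus\{a,b\}$ and construct $n-1$ arc-disjoint cycles through $S$, each either Hamiltonian or omitting one or both of $a,b$. For $n=4$ and $S=\{1,2\}$ the cycles $1\,2\,1$, $1\,3\,2\,4\,1$ and $1\,4\,2\,3\,1$ work. For $n=6$ and $S=\{1,2,3,4\}$, the idea is that cycles avoiding $a$ or $b$ give the flexibility that a full Hamiltonian decomposition lacks. Mix cycles of lengths $4$, $5$ and $6$ so that the total number of arcs used is at most $30$. Then check the five cycles explicitly; a near-decomposition plus local repair should do.

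\textbf{Part (ii), upper bound (c).} We must show that no $S$ with $|S|=n-1$ admits $n-1$ arc-disjoint $S$-cycles. Let $S=V\setminus\{v\}$. Each $S$-cycle is either Hamiltonian or a Hamiltonian cycle of $\overleftrightarrow{K}_n-v\cong\overleftrightarrow{K}_{n-1}$. Suppose we have $n-1$ such cycles. Each vertex of $S$ has out-degree $n-1$, so all out-arcs of every $u\in S$ are used. In particular every arc $uv$ with $u\in S$ is used, and an arc into $v$ only lies on a Hamiltonian cycle. Since each Hamiltonian cycle uses exactly one arc into $v$, all $n-1$ cycles are Hamiltonian. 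They then use $n(n-1)$ arcs, which is all of $A(\overleftrightarrow{K}_n)$, giving a Hamiltonian decomposition of $\overleftrightarrow{K}_n$. This contradicts Theorem~\ref{Tillson's} for $n=4,6$.

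The main obstacle is the explicit constructions for $n=6$, namely the four Hamiltonian cycles in (b) and especially the five cycles in (a). I would try rotational constructions on $\mathbb{Z}_5\cup\{\infty\}$ first.
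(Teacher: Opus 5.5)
Your skeleton is the paper's. Part (i) uses Fact~\ref{Fact1} plus Tillson. In (ii), the lower bounds come from explicit packings, and the upper bound for $|S|=n-1$ comes from the argument that every arc at the omitted vertex is consumed, so all $n-1$ cycles are Hamiltonian and decompose $\overleftrightarrow{K}_n$. That is exactly the paper's argument in Case~1 and Subcase~2.2. Reducing everything to (a)--(c) via Fact~\ref{Fact2} is slightly tidier bookkeeping, and your $n=4$ packings check out.

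\textbf{The gap is step (a) for $n=6$.} You never exhibit five arc-disjoint cycles through $S=\{1,2,3,4\}$. Because your reduction sends $k=2,3,4$ for $n=6$ through this single step, the claim $\lambda_k^c(\overleftrightarrow{K}_6)=5$ for $2\le k\le 4$ is left unproved. The suggested rotational construction on $\mathbb{Z}_5\cup\{\infty\}$ cannot supply it. The orbits of that action have sizes $5$ and $1$, so no $4$-set is invariant, and rotating a base cycle through $S$ gives cycles through other $4$-sets. The idea is fine for (b), where $S=V$. ``Near-decomposition plus local repair should do'' is a hope, not an argument.

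The construction has very little slack. Each vertex of $S$ has semi-degree $5$ and lies on all five cycles, so every arc incident with $S$ is used: the $12$ arcs inside $S$, the $8$ arcs from $S$ to $\{5,6\}$, and the $8$ arcs back. Thus the $20$ segments between consecutive $S$-vertices consist of $12$ direct arcs and $8$ excursions into $\{5,6\}$. Each cycle can have at most two excursions, and at least $28$ of the $30$ arcs must be used.

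The paper meets these constraints with three Hamiltonian cycles and two $5$-cycles: $1\,2\,3\,4\,5\,1$, $1\,3\,6\,2\,4\,1$, $1\,4\,3\,5\,2\,6\,1$, $1\,5\,4\,6\,3\,2\,1$, $1\,6\,4\,2\,5\,3\,1$. These use $28$ distinct arcs, all except $56$ and $65$.

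For (b) with $n=6$, take two edge-disjoint Hamiltonian cycles of $K_6$ in both orientations, for example $1\,2\,3\,4\,5\,6\,1$ and $1\,3\,6\,4\,2\,5\,1$ together with their reverses. With these two packings inserted, your proof is complete.
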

\begin{proof}
We first prove part \textbf{(i)}.  Let $D\cong \overleftrightarrow{K}_{n}$ and  $S\subseteq V(D)$ with $|S|=k$. By Theorem~\ref{Tillson's}, the arcs of $D$ can be decomposed into $n-1$ Hamiltonian cycles and so $\lambda_{n}^{c}(D)\ge n-1$. Therefore, $\lambda_{k}^{c}(D)\ge \lambda_{n}^{c}(D)\ge n-1$ by Fact~\ref{Fact2}. On the other hand, $\lambda_{k}^{c}(D)\le n-1$ due to Fact~\ref{Fact1}.  Therefore, $\lambda_{k}^{c}(D)= n-1$, as desired.

We next prove part \textbf{(ii)}. Let $D\cong \overleftrightarrow{K}_{n}$ and $V(D)=\{u_i\mid i\in [n]\}$. We claim that for $n=4,6$, $\lambda_n^c(D)=n-2$.  When $n=4$, $C_1=u_1u_2u_3u_4u_1$ and $C_2=u_1u_4u_3u_2u_1$ are two arc-disjoint $S$-cycles in $D$, thus  $\lambda _{n}^{c}(D)\ge2=n-2$. When $n=6$, $C_1=u_1u_2u_3u_4u_5u_6u_1$, $C_2=u_1u_6u_5u_4u_3u_2u_1$, $C_3=u_1u_3u_6u_4u_2u_5u_1$ and $C_4=u_1u_5u_2u_4u_6u_3u_1$ are four arc-disjoint $S$-cycles in $D$. Hence, $\lambda _{n}^{c}(D)\ge4=n-2$. Furthermore, by Theorem~\ref{Tillson's}, we ascertain that when $n=4,6$, $\lambda _{n}^{c}(D)\ne n-1$, which implies that $\lambda _{n}^{c}(D)\le n-2$.  Thus $\lambda _{n}^{c}(D)=n-2$ for $n=4,6$. We now handle the remaining cases.

\begin{case 1}$n=4$ and $k\in \{2,3\}$. The cases $k=2$ for $n=4$ are simple, so we only discuss $k=3$ in detail.


Without loss of generality, assume that $S=\{u_1,u_2,u_3\}\subseteq V(D)$.
Clearly, $C_1=u_1u_2u_3u_1$, $C_2=u_1u_3u_2u_1$ are two arc-disjoint $S$-cycles in $D$. Hence, $\lambda _{k}^{c}(D)\geq 2$. 

Next, we prove $\lambda _{k}^{c}(D)\leq 2$. Assume that $\lambda_k^c(D) \ge 3$. Then there exists $S = \{u_1, u_2, u_3\} \subseteq V(D)$ such that $D$ contains three arc-disjoint $S$-cycles. Every vertex $u_i\in S$ is satisfied $deg_D^+(u_i)=deg_D^-(u_i)=3$, and each $S$-cycle uses exactly one out-arc and one in-arc at $u_i$. Thus the three $S$-cycles together consume all six arcs incident with the fourth vertex $u_4$ (the arcs $\overrightarrow{u_iu_4}$ and $\overrightarrow{u_4u_i}$ for $i \in [3]$), forcing all three $S$-cycles to contain $u_4$. Consequently, we have three arc-disjoint Hamiltonian cycles in $D$, which contradicts Theorem~\ref{Tillson's}. Hence $\lambda _{k}^{c}(D)\leq 2$, and we conclude $\lambda _{k}^{c}(D) = 2$.

\end{case 1}

\begin{case 2}$n=6$ and $k\in\{2,3,4,5\}$. The cases $k=2,3$ for $n=6$ are simple, so we only discuss $k=4,5$ in detail.


\begin{subcase 2.1} When $n=6$ and $k=4$, without loss of generality,  assume that $S=\{u_1,u_2,u_3,u_4\}\subseteq V(D)$. We can find $C_1=u_1u_2u_3u_4u_5u_1, C_2=u_1u_3u_6u_2u_4u_1,C_3=u_1u_4u_3u_5u_2u_6u_1,C_4=u_1u_5u_4u_6u_3u_2u_1, C_5=u_1u_6u_4u_2\\u_5u_3u_1$ are five arc-disjoint $S$-cycles in $D$. Thus $\lambda_k^c(D) \geq 5$. By Fact ~\ref{Fact2}, we have $\lambda_k^c(D) \leq 5$. Therefore, $\lambda_k^c(D) = 5$.

\end{subcase 2.1}

\begin{subcase 2.2} When $n=6$ and $k=5$, we may assume $S=\{u_1,u_2,u_3,u_4,u_5\}$ and let $u_6$ be the remaining vertex. $C_1=u_1u_2u_3u_4u_5u_1$, $C_2=u_1u_3u_5u_2u_4u_1$, $C_3=u_1u_4u_2u_5u_3u_1$ and $C_4=u_1u_5u_4u_3u_2u_1$ are four arc-disjoint $S$-cycles in $D$. Hence $\lambda _{k}^{c}(D)\geq 4$. 

Suppose, for contradiction, that $\lambda_k^c(D) \geq 5$. Then there exist five arc-disjoint $S$-cycles. For every $u_i \in V(D)$, we have $deg_D^+(u_i)=deg_D^-(u_i)=5$. These five $S$-cycles consume exactly one out-arc and one in-arc at each $u_i$, hence they use all ten arcs incident with $u_6$ (the arcs $\overrightarrow{u_iu_6}$ and $\overrightarrow{u_6u_i}$ for $i\in[5]$), forcing all five $S$-cycles to contain $u_6$. This means we have five arc-disjoint Hamiltonian cycles in $D$, which contradicts Theorem~\ref{Tillson's}. Hence $\lambda_k^c(D) \leq 4$, and  we conclude $\lambda_k^c(D) = 4$. \qedhere
\end{subcase 2.2}

\end{case 2} 
\end{proof}




We now consider the digraphs $\overleftrightarrow{K}_{[w]^l}$ and $\overleftrightarrow{K}_{t,z}$. To determine their directed cycle $k$-arc-connectivity, we first recall a classical result of Ng on Hamiltonian cycle decomposition for $\overleftrightarrow{K}_{[w]^l}$ (Theorem~\ref{Ng L L}), which will be used in our proofs for both families. Then we state our main theorem for these digraphs (Theorem~\ref{2}) and present its proof.

\begin{theorem}\label{Ng L L}\cite{{Ng L L}} 
The digraph $\overleftrightarrow{K}_{w,w,\dots,w } $ ($l$ times)(simply,  $\overleftrightarrow{K}_{[w]^l}$)  is Hamiltonian decomposable if and only if $(l,w)\neq (4,1) $ and $(l,w)\neq (6,1)$.
\end{theorem}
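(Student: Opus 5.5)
The necessity direction is immediate from what is already in the paper. For $w=1$ we have $\overleftrightarrow{K}_{[1]^l}\cong\overleftrightarrow{K}_l$, so Theorem~\ref{Tillson's} shows that $(l,w)=(4,1)$ and $(6,1)$ are not Hamiltonian decomposable. For sufficiency, the case $w=1$, $l\ne 4,6$ is again exactly Theorem~\ref{Tillson's}. So the real content is the case $w\ge 2$. There I plan a \emph{blow-up} (lexicographic product) construction: write $\overleftrightarrow{K}_{[w]^l}=\overleftrightarrow{K}_l[\overline{K}_w]$, with vertices $(i,a)$, $i\in\mathbb{Z}_l$, $a\in\mathbb{Z}_w$.

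\textbf{Step 1 (the generic case $w\ge 2$, $l\ne 4,6$).} Start from a Hamiltonian decomposition $H_1,\dots,H_{l-1}$ of $\overleftrightarrow{K}_l$ given by Theorem~\ref{Tillson's}. The arcs of $\overleftrightarrow{K}_{[w]^l}$ split as $\bigcup_r H_r[\overline{K}_w]$. Each $H_r[\overline{K}_w]$ is a copy of $\vec C_l[\overline{K}_w]$, and it suffices to decompose $\vec C_l[\overline{K}_w]$ into $w$ Hamiltonian cycles.

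To do this, relabel the parts so that the cycle is $0\to1\to\cdots\to l-1\to 0$. For $j\in\mathbb{Z}_w$, let cycle $Z_j$ use:
\begin{itemize}
\item the arcs $(i,a)\to(i+1,a+\sigma_i(j))$ for $0\le i\le l-1$;
\item here each $\sigma_i$ is a permutation of $\mathbb{Z}_w$, which makes the $Z_j$ arc-disjoint and exhaustive between consecutive parts;
\item and the total shift $\sum_i\sigma_i(j)$ is chosen to be a unit mod $w$, so that each $Z_j$ is a single cycle of length $lw$ rather than a union of shorter cycles.
\end{itemize}
A convenient choice is $\sigma_i(j)=j$ for $i\le l-3$, $\sigma_{l-2}(j)=\pi(j)$ and $\sigma_{l-1}(j)=1-(l-2)j-\pi(j)$. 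With this choice the total shift is $1$ for every $j$. One then needs a permutation $\pi$ such that $j\mapsto -(l-2)j-\pi(j)$ is also a permutation, i.e.\ an orthomorphism-type condition. For odd $w$ this is easy. For even $w$ such "complete mappings" of $\mathbb{Z}_w$ need not exist, so I would instead allow a non-constant $\sigma$ on one layer: use two different shifts on the two halves of a part. Alternatively, one can cite the known decomposition of $\vec C_l[\overline{K}_w]$ into Hamiltonian cycles.

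\textbf{Step 2 (the exceptional bases $l\in\{4,6\}$, $w\ge 2$).} Here $\overleftrightarrow{K}_l$ has no Hamiltonian decomposition, so Step 1 cannot start. Instead I would decompose $\overleftrightarrow{K}_l$ into the $l-2$ Hamiltonian cycles exhibited in the proof of Theorem~\ref{1}, plus a leftover that is a union of two anti-directed/arc-reversal structures. Concretely, for $l=4$ the leftover after $C_1,C_2$ consists of the $2$-cycles on $\{u_1,u_3\}$ and $\{u_2,u_4\}$. The blown-up leftover, together with one blown-up Hamiltonian cycle $C_1[\overline{K}_w]$, is then recombined by hand into $2w$ Hamiltonian cycles of $\overleftrightarrow{K}_{[w]^l}$. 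The point is that the leftover blows up to $\overleftrightarrow{K}_{w,w}$'s, which contain directed Hamiltonian-like paths that can be spliced into the cycles of $C_1[\overline{K}_w]$ by a switching argument.

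I expect Step 2, together with the even-$w$ subtlety of Step 1, to be the main obstacle. The exceptional cases $l=4,6$ with small $w$, notably $w=2$, will probably need explicit constructions checked directly, in the spirit of the explicit cycles listed in Theorem~\ref{1}. I would first try $w=2,3$ by hand, and then induct on $w$ by adding two parts' worth of vertices via Step 1-type shifts.
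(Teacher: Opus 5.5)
The paper does not prove this statement; it cites it from Ng, so your proposal has to stand on its own. Your necessity argument and the case $w=1$ are fine. Your shift construction in Step~1 is also correct for odd $w$: take $\pi(j)=cj$ with $c$ and $c+l-2$ both units mod $w$, which exists because every prime divisor of $w$ is at least $3$. The same linear choice with $c$ odd works for even $w$ and even $l$.

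\textbf{The reduction in Step~1 fails for even $w$ and odd $l$.} There it asks for something that can be false. Take $w=2$. A Hamiltonian cycle of $\vec C_l[\overline K_2]$ uses, between parts $i$ and $i+1$, a perfect matching that is either straight or crossed. It closes into a single cycle of length $2l$ if and only if the number of crossed layers is odd. The second cycle of a putative decomposition must use the complementary matching in every layer, so the two counts are $m$ and $l-m$, which cannot both be odd when $l$ is odd. Hence $\vec C_l[\overline K_2]$ has no Hamiltonian decomposition for odd $l$ (already $\vec C_3[\overline K_2]$ fails). Yet $\overleftrightarrow{K}_{2,2,2}$ is decomposable, so its Hamiltonian cycles must mix the blow-ups of different $H_r$.

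Your ``two shifts on two halves'' idea is vacuous for $w=2$, since every matching between two $2$-sets is a shift. There is also no general decomposition of $\vec C_l[\overline K_w]$ to cite. For $w\equiv 2\pmod 4$ and odd $l$, every pure shift scheme fails as well: $\sum_j\sum_i\sigma_i(j)\equiv l\,w(w-1)/2\pmod{w}$ is odd, whereas a sum of $w$ odd units is even.

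A simple repair is to go through the undirected graph. When $w(l-1)$ is even, $K_{[w]^l}$ decomposes into Hamiltonian cycles (Laskar and Auerbach). Taking both orientations of each cycle then decomposes $\overleftrightarrow{K}_{[w]^l}$. This covers every even $w$, including $l\in\{4,6\}$, and every odd $l$. Your Tillson blow-up is then needed only for odd $w$ and even $l\notin\{4,6\}$, where it works.

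\textbf{Step~2 does not prove the remaining case.} What is left is $w$ odd, $w\ge 3$, $l\in\{4,6\}$. Here $w(l-1)$ is odd and $\overleftrightarrow{K}_l$ has no Hamiltonian decomposition; this is exactly the substantive part of Ng's theorem. You claim that $C_1[\overline K_w]$ plus the blown-up leftover can be ``recombined by hand'' into $2w$ Hamiltonian cycles by a switching argument. That claim is precisely what needs an explicit construction.

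Any such argument must genuinely use $w\ge 3$. For $w=1$, the only Hamiltonian cycle of $\overleftrightarrow{K}_4$ inside $C_1\cup\{\overrightarrow{u_1u_3},\overrightarrow{u_3u_1},\overrightarrow{u_2u_4},\overrightarrow{u_4u_2}\}$ is $C_1$ itself.

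Two further points in Step~2 are off. For $l=6$, the leftover after the four cycles in the proof of Theorem~\ref{1} consists of three $2$-cycles (on $\{u_1,u_4\}$, $\{u_2,u_6\}$, $\{u_3,u_5\}$), not two structures. An ``induction on $w$ by adding two parts' worth of vertices'' changes $l$, not $w$.

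To finish, you need explicit Hamiltonian decompositions of $\overleftrightarrow{K}_{[w]^4}$ and $\overleftrightarrow{K}_{[w]^6}$ for all odd $w\ge 3$, or else cite Ng, as the paper does.
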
 

\begin{theorem}\label{2}
For $\overleftrightarrow{K}_{[w]^l}$, the values of $\lambda_k^c$ are as follows:
\begin{description}
\item[(i) ]When $(l,w)\notin \{(4,1), (6,1)\}$, we have $\lambda _{k}^{c}(\overleftrightarrow{K}_{[w]^l})=w(l-1)$.
\item[(ii) ]When $2\le t<z$, we have $\lambda _{k}^{c}(\overleftrightarrow{K}_{t,z}) = \left\{\begin{matrix}
 t, & 2\le k\le t, \\
 0, & t+1\le k\le t+z.
\end{matrix}\right. $
\end{description}
\end{theorem}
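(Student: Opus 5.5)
Part (i) follows the template of Theorem~\ref{1}(i). Let $D\cong\overleftrightarrow{K}_{[w]^l}$. Every vertex has in- and out-degree $w(l-1)$, so Fact~\ref{Fact1} gives $\lambda_k^c(D)\le w(l-1)$ for every $k$. For the lower bound, assume $(l,w)\notin\{(4,1),(6,1)\}$. Then Theorem~\ref{Ng L L} decomposes $A(D)$ into Hamiltonian cycles. Since $D$ has $wl\cdot w(l-1)$ arcs and each Hamiltonian cycle uses $wl$ arcs, there are exactly $w(l-1)$ of them, and they are pairwise arc-disjoint $S$-cycles for $S=V(D)$. Hence $\lambda_{wl}^c(D)\ge w(l-1)$, and Fact~\ref{Fact2} pushes this bound down to every $2\le k\le wl$. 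The degenerate case $l=1$ gives no arcs and value $0$, and can be noted separately.

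For part (ii), let the bipartition of $D\cong\overleftrightarrow{K}_{t,z}$ be $X=\{a_1,\dots,a_t\}$ and $Y=\{b_1,\dots,b_z\}$ with $t<z$. Any directed cycle alternates between $X$ and $Y$, so it contains equally many vertices of each side, hence at most $2t$ vertices and at most $t$ vertices of $Y$.

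For $t+1\le k\le t+z$, choose $S\subseteq Y$ with $|S|=k\ge t+1$; this is possible because $z\ge t+1$. No cycle can contain $S$, so $\lambda_S^c(D)=0$ and therefore $\lambda_k^c(D)=0$.

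For $2\le k\le t$, the upper bound is $\lambda_k^c(D)\le\delta^0(D)=t$ by Fact~\ref{Fact1}. For the lower bound, by Fact~\ref{Fact2} it suffices to handle $k=t$. We fix an arbitrary $S$ with $|S|=t$, writing $|S\cap X|=r$ and $|S\cap Y|=t-r$. Then $|S\cap Y|\le t$, so $S$ extends to a set $X\cup Y'$ with $Y'\subseteq Y$, $|Y'|=t$ and $S\cap Y\subseteq Y'$. The subdigraph induced by $X\cup Y'$ is $\overleftrightarrow{K}_{t,t}=\overleftrightarrow{K}_{[t]^2}$. Since $(l,w)=(2,t)$ is not excluded, Theorem~\ref{Ng L L} decomposes it into $t$ arc-disjoint Hamiltonian cycles, each of which contains $S$. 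So $\lambda_S^c(D)\ge t$, and hence $\lambda_k^c(D)=t$.

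The main step to be careful about is this extension argument: it needs both $S\cap Y$ to fit inside a $t$-subset of $Y$ (true because $|S|\le t$) and the induced $\overleftrightarrow{K}_{t,t}$ to be Hamiltonian decomposable. The only other point requiring care is the counting that a cycle in a bipartite digraph uses equally many vertices from each side, which drives the zero case.
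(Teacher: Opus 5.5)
Your part (i) and your treatment of $2\le k\le t$ in part (ii) follow the paper's argument. In (i) you use Ng's Hamiltonian decomposition together with Fact~\ref{Fact1}. For $2\le k\le t$ you place $S$ inside an induced $\overleftrightarrow{K}_{t,t}$ and decompose it. Reducing to $k=t$ via Fact~\ref{Fact2} is a harmless variation.

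The gap is in the zero case. You write ``choose $S\subseteq Y$ with $|S|=k\ge t+1$; this is possible because $z\ge t+1$''. But $|Y|=z$, so a $k$-subset of $Y$ exists only when $k\le z$. The hypothesis $z\ge t+1$ guarantees this for $k=t+1$, not for the whole range $t+1\le k\le t+z$. For $z<k\le t+z$ the set you choose does not exist. This includes $k=t+z$, where necessarily $S=V(D)$. So as written you have not shown $\lambda_k^c(D)=0$ for these $k$.

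The repair uses only what you already have; either of the following works.
\begin{itemize}
\item Take $S=Y\cup X_0$ with $X_0\subseteq X$ and $|X_0|=k-z$; this is the paper's set $S''$. Then $|S\cap Y|=z\ge t+1$. By your observation that every cycle meets $Y$ in at most $t$ vertices, $S$ lies on no cycle.
\item Note that $\lambda_{t+1}^c(D)=0$ from your construction. Fact~\ref{Fact2} then gives $0\le\lambda_k^c(D)\le\lambda_{t+1}^c(D)=0$ for every $k\ge t+1$.
\end{itemize}
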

\begin{proof} We first prove part \textbf{(i)}. Let $D\cong\overleftrightarrow{K}_{[w]^l}$ and $S\subseteq V(D)$ with $|S|=k$, where  $(l,w) \notin \{(4,1), (6,1)\}$. On the one hand, by Theorem~\ref{Ng L L}, $D$ can be decomposed into $w(l-1)$ Hamiltonian cycles, which are also $w(l-1)$ arc-disjoint $S$-cycles. Hence, $\lambda _{k}^{c}(D)\ge w(l-1)$. On the other hand, by Fact~\ref{Fact1}, $\lambda_{k}^{c}(D)\le \delta^{0}(D)=w(l-1)$.  Therefore, $\lambda_{k}^{c}(D)= w(l-1)$, as desired.


We next prove part \textbf{(ii)}. Let $D\cong\overleftrightarrow{K}_{t,z}$ with two parts $X=\left \{ x_{1} ,x_{2},\dots ,x_{t} \right \} $ and $Y=\left \{ y_{1} ,y_{2},\dots ,y_{z} \right \}$, where $2\le t<z$. The proof will be completed by the following claims.
\vspace{2mm}

\noindent{\bf Claim 1:} 
When $2\le k\le t$, we have $\lambda _{k}^{c}(D)=t $.   

\vspace{2mm}

\noindent{\bf Proof of Claim 1:} Let $S\subseteq V(D)$ with $|S|=k$. We will complete the proof by considering the distribution of vertices in $S$.

\begin{case 1}$S\subseteq X$ or $S\subseteq Y$.

If $S\subseteq X$, without loss of generality, assume $S=\{x_1,\dots,x_k\}$. Since $k\leq t < z$, there exists a copy of $\overleftrightarrow{K}_{t,t}$ in $D$ that contains all vertices of $S$. By Theorem~\ref{Ng L L}, this copy can be decomposed into $t$ Hamiltonian cycles, which are $t$ arc-disjoint $S$-cycles. The case $S\subseteq Y$ is analogous.

Hence, in this case we have $\lambda _{S}^{c}(D)\geq t$.


\end{case 1}
\begin{case 2}$S\cap X\ne \emptyset, S\cap Y\ne \emptyset $.

Without loss of generality, assume $S=\{x_1,\dots,x_p,y_1,\dots,y_{k-p}\}$, where $p < k$. Since $k\le t < z$, there exists a copy of $\overleftrightarrow{K}_{t,t}$ in $D$ that contains all vertices of $S$. By Theorem~\ref{Ng L L}, this copy can be decomposed into $t$ Hamiltonian cycles, which are $t$ arc-disjoint $S$-cycles.

\end{case 2}
Now we have  $\lambda _{S}^{c}(D)\ge t$ for each $S\subseteq V(D)$ and so $\lambda_{k}^{c}(D)\ge t$. Furthermore, we conclude that $\lambda _{k}^{c}(D)=t$ by Fact~\ref{Fact1}. 
\qed

\noindent{\bf Claim 2:} When $t+1\le k\le t+z$, we have $\lambda _{k}^{c}(D)=0 $.

\vspace{2mm}

\noindent{\bf Proof of Claim 2:} For $t+1\leq k\leq z$, take $S'=\{y_1,\dots,y_k\}$. For $z<k\le t+z$, take $S''=\{x_1,\dots,x_{k-z},y_1,\dots,y_z\}$ where $k-z<z$. Since every cycle in $\overleftrightarrow{K}_{t,z}$ is even and contains an equal number of vertices from $X$ and $Y$,  there exists neither an $S'$-cycle nor an $S''$-cycle. Hence $\lambda_k^c(D)=0$.\qedhere

\end{proof}

\vskip 1cm

\noindent{\bf Acknowledgement.} This work was supported by National Natural Science Foundation of China under Grant No. 12371352 and Yongjiang Talent Introduction Programme of Ningbo under Grant No. 2021B-011-G. 

\vskip 3mm
\noindent {\bf Data availability} No data was used for the research described in the article.


\begin{thebibliography}{20}


\bibitem{Bang-Jensen09} J. Bang-Jensen and G. Gutin, Digraphs: Theory, Algorithms and Applications, 2nd Edition, Springer, London, 2009.

\bibitem{J.A. Bondy} J.A. Bondy and U.S.R. Murty, Graph Theory, Springer, Berlin, 2008.

\bibitem{Cheriyan-Salavatipour} J. Cheriyan and M. Salavatipour, Hardness and approximation results for packing Steiner trees, Algorithmica, 45, 2006, 21--43.


\bibitem{S. Fortune} S. Fortune, J. Hopcroft and J. Wyllie, The directed subgraph homeomorphism problem, Theoret. Comput. Sci., 10, 1980, 111--121.

\bibitem{Grotschel-Martin-Weismantel} M. Gr\"{o}tschel, A. Martin and R. Weismantel, The Steiner tree packing problem in VLSI design, Math. Program., 78, 1997, 265--281.

\bibitem{M. R. Garey} M. R. Garey, D. S. Johnson, and R. E. Tarjan, The planar Hamiltonian circuit problem is NP-complete, SIAM J. Comput., 5(4), 1976, 704--714.


 \bibitem{Karp} R. M. Karp, Reducibility among combinatorial problems, Complexity of Computer Computations, R. E. Miller and J. W. Thatcher, eds., Plenum Press, New York, 1972, 85--103.


\bibitem{Li-Mao5} X. Li and Y. Mao, Generalized Connectivity of Graphs, Springer, Switzerland, 2016.


\bibitem{Ng L L} L.L. Ng, Hamiltonian decomposition of complete regular multipartite digraphs, Discrete Math., 177(1-3), 1997, 279--285.

\bibitem{Naves} G. Naves, The hardness of routing two pairs on one face, Math. Program., 131(1), 2012, 49--69.

\bibitem{C. Picouleau} C. Picouleau, Complexity of the Hamiltonian cycle in regular graph problem, Theor. Comput. Sci., 131(2), 1994, 463--473.

\bibitem{Sherwani}
N. Sherwani, Algorithms for VLSI Physical Design Automation, 3rd Edition, Kluwer Acad. Pub., London, 1999.

\bibitem{Sun-book} Y. Sun, Steiner Type Packing Problems in Digraphs, SpringerBriefs in Mathematics, Springer, Singapore, 2026.


\bibitem{Sun-Gutin2} Y. Sun and G. Gutin, Strong subgraph connectivity of digraphs, Graphs Combin., 37, 2021, 951--970.

\bibitem{Sun-Gutin-Yeo-Zhang} Y. Sun, G. Gutin, A. Yeo and X. Zhang, Strong subgraph $k$-connectivity, J. Graph Theory, 92(1), 2019, 5--18.

\bibitem{Sun-Gutin-Zhang}
Y. Sun, G. Gutin and X. Zhang, Packing strong subgraph in digraphs, Discrete Optim.,  46, 2022, Article 100745.



\bibitem{Sun-Jin1}Y. Sun and Z. Jin, Perfect out-forest problem and directed Steiner cycle packing problem, Discrete Appl. Math., 366, 2025, 201--209.

\bibitem{Sun-Yeo} Y. Sun and A. Yeo, Directed Steiner tree packing and directed tree connectivity, J. Graph Theory, 102(1), 2023, 86--106.

\bibitem{Sun-Zhang} Y. Sun and X. Zhang, Algorithmic and structural results of directed Steiner path packing and directed path connectivity, Graphs and Combin., 42(1), 2026, Article 2.



\bibitem{Tillson T W} T. W. Tillson, A Hamiltonian decomposition of $K_{2m}^{*}$, $2m\ge 8$, J. Combin. Theory Ser. B, 29(1), 1980, 68--74.

\bibitem{Wang-Sun} C. Wang and Y. Sun, Directed cycle $k$-connectivity of complete digraphs and complete regular bipartite digraphs, Discrete Appl. Math., 358, 2024, 203–213.
\end{thebibliography}
\end{document}